\documentclass[letterpaper, twoside, 11pt]{amsart}
\usepackage[T2A]{fontenc}
\usepackage[utf8]{inputenc}	
\usepackage[main=english, russian]{babel}
\usepackage{geometry}
\usepackage{comment}

\usepackage{xcolor}
\definecolor{forestgreen}{HTML}{228B22}
\definecolor{royalblue}{HTML}{4169E1}

\usepackage[unicode, pdftex]{hyperref}
\hypersetup{
	colorlinks   = true, 
	urlcolor     = blue, 
	linkcolor    = royalblue, 
	citecolor    = forestgreen 
}

\usepackage[noabbrev,capitalise,nameinlink]{cleveref}

\usepackage[abbrev,alphabetic,backrefs,initials,nobysame]{amsrefs}

\usepackage{amssymb}
\usepackage{tikz-cd}
\usepackage[all]{xy}
\usepackage{mathrsfs}

\newtheorem{theorem}{Theorem}[section]
\newtheorem{proposition}[theorem]{Proposition}
\newtheorem{lemma}[theorem]{Lemma}
\newtheorem{corollary}[theorem]{Corollary}
\newtheorem{conjecture}[theorem]{Conjecture}

\theoremstyle{definition}
\newtheorem{definition}[theorem]{Definition}
\newtheorem{construction}[theorem]{Construction}
\newtheorem{example}[theorem]{Example}

\theoremstyle{remark}
\newtheorem{remark}[theorem]{Remark}
\newtheorem{warning}[theorem]{Warning}

\newtheorem*{question*}{Questions}

\theoremstyle{plain}
\newcounter{intro}

\newtheorem{intro-conjecture}[intro]{Conjecture}
\newtheorem{intro-corollary}[intro]{Corollary}
\newtheorem{intro-theorem}[intro]{Theorem}
\newtheorem*{intro-corollary*}{Corollary}

\newcommand{\Q}{\mathbb{Q}}
\newcommand{\R}{\mathbb{R}}
\newcommand{\Ko}{\mathbb{C}}
\newcommand{\Zi}{\mathbb{Z}}

\newcommand{\Cc}{\mathfrak{C}}

\newcommand{\X}{\mathcal{X}}
\newcommand{\Xm}{\mathscr{X}}
\newcommand{\B}{\mathcal{B}}
\newcommand{\Bm}{\mathscr{B}}
\newcommand{\Z}{\mathcal{Z}}
\newcommand{\A}{\mathcal{A}}
\newcommand{\Ra}{\mathcal{R}}
\newcommand{\Sy}{\mathcal{S}}
\newcommand{\Cy}{\mathcal{CY}}
\newcommand{\Y}{\mathcal{Y}}
\newcommand{\V}{\mathcal{V}}
\newcommand{\Qp}{\mathcal{Q}}

\newcommand{\Li}{\mathcal{L}}

\newcommand{\Ho}{\mathcal{H}}

\newcommand{\Os}{\mathcal{O}}

\newcommand{\Pp}{\mathbb{P}}
\newcommand{\al}{\alpha}
\newcommand{\Aut}{\textnormal{Aut}}
\newcommand{\gr}{\textnormal{gr}\,}
\newcommand{\D}{\mathbb{D}}

\newcommand{\Ker}{\textnormal{Ker}}
\newcommand{\Burn}{\mathbf{Burn}^m}
\newcommand{\Spec}{\textnormal{Spec}\,}

\newcommand{\CY}{\textnormal{CY}}
\newcommand{\an}{\textnormal{an}}
\newcommand{\prim}{\textnormal{prim}}
\newcommand{\full}{\textnormal{full}}
\newcommand{\tr}{\textnormal{tr}}
\newcommand{\bir}{\textnormal{bir}}

\newcommand{\Iso}{\textnormal{Iso}}

\let\emptyset\varnothing

\title{Towards the Global Torelli Theorem}
\author[D. V. Serebrennikov]{Daniil Serebrennikov}
\address{Department of Mathematics, Johns Hopkins University, Baltimore, MD 21218, USA.}
\email{dserebr1@jhu.edu}

\begin{document}
	\begin{abstract}
		In this paper, we prove finiteness results for K-trivial varieties in all dimensions. In dimension 2, these results are corollaries of the Global Torelli Theorem for polarized K3 surfaces. In particular, we establish a sufficient condition when two fibers of a projective family of K-trivial varieties are isomorphic via the Hodge theory. As applications, we prove that if a projective family of K-trivial varieties has a dense subset of birationally equivalent fibers then all fibers are isomorphic.
	\end{abstract}
	\maketitle
	\tableofcontents
	\section{Introduction}
	In this paper we work with families of polarized Calabi--Yau pairs with at worst klt singularities over the field of complex numbers $\Ko$. For simplicity, we discuss our results for \textit{K-trivial varieties}, that is,  smooth projective varieties $X$ such that the canonical line bundles $\omega_X$ are trivial, i.e. $\omega_X \approx \Os_X$. It is a classical theorem \cite[Historical Sketch, Section 5]{Sha94} due to Riemann, Frobenius, Weierstrass, Poincar\'e, Lefschetz, et al.  that an abelian variety $X$ is determined up to isomorphism by its integral Hodge structure on $H^1(X, \Zi)$, while a K3 surface $X$, i.e. $\omega_X\approx \Os_X$ and $\ h^1(\Os_X) = 0$, is determined by its integral Hodge structure on $H^2(X, \Zi)$ together with the intersection form \cite{PSSh71, BR75, LP81, Fri84} (see also \cite{Mar11, Huy12, Ver13, Loo21} for generalizations to hyperk\"ahler manifolds). By contrast, for K-trivial threefolds $X$ with $h^1(\Os_X) = 0$, even the polarized integral Hodge structure on $H^3(X, \Zi)$ does not determine the variety up to isomorphism (resp. up to birational equivalence) by \cite{Sze00} (resp. by \cite{OR18, BCP20, Ram24}). Although uniqueness is proven for some classes of varieties \cite{Voi99, Voi22}, we focus on finiteness as a natural weakening of uniqueness.
	
	\begin{question*}
		 Is an isomorphism/birational type of a K-trivial variety $X$ determined by its polarized periods {up to finitely many possibilities}? Does $X$ admit at most finitely many polarizations?
	\end{question*}
	
	Note that even the original Torelli theorem \cite{Tor13} with \cite{NN81} implies that there are at most finitely many isomorphism types of smooth projective curves whose periods coincide. To formalize these questions, let us show what finiteness results follow from the Global Torelli Theorem for polarized K3 surfaces.
	A \textit{polarized K3 surface} is a pair $(X, A)$ consisting of a K3 surface $X$ and an (primitive) ample line bundle $A$ on $X$. Its degree is the self-intersection number $A^2$.  According to the Global Torelli Theorem (see \cite[Chapter 7]{Huy16} and references therein), the period map from the course moduli space $M_{2, N}^\text{K3}$ for polarized K3 surfaces of degree $N$ is an open embedding of quasi-projective schemes:
	\begin{equation*}
		\Phi^\text{K3}: M_{2, N}^{\textnormal{K3}} \hookrightarrow \Gamma^\text{K3} \backslash \mathbb{D}^\text{K3}
	\end{equation*}
	 where $\mathbb{D}^\text{K3}$ is a period domain for polarized K3 surfaces, and $\Gamma^\text{K3}$ is an arithmetic group preserving the polarization. Next, for every smooth family $(\X/S, \mathcal{A})$ of polarized K3 surfaces of degree $N$, there is a period map $\Phi: S\to \Gamma^\text{K3} \backslash \mathbb{D}^\text{K3}$ which factors through $\Phi^\text{K3}$.  Then we obtain the following.
	\begin{enumerate}
		\item 
			 For any point $\star\in \Gamma^\text{K3} \backslash \mathbb{D}^\text{K3}$ we have $(\X_s, \A_s)\approx (\X_t, \A_t)$ for all (closed) points $s,t\in \Phi^{-1}(\star)$.
		\item 
			The global period map $\Phi^\text{K3}: M_{2, N}^\text{K3}\hookrightarrow \Gamma^\text{K3} \backslash \mathbb{D}^\text{K3}$ is quasi-finite, i.e. all fibers are finite sets.
		\item 
			For every K3 surface $X$ the set $\{ \ [(X', A')]\in M_{2, N}^\text{K3}: X'\approx X \ \}$ is finite by \cite{Ste85}.
			
	\end{enumerate}
	
	Note that property (3) is the most non-trivial corollary, whose generalization to K-trivial threefolds $X$ with $h^1(\Os_X) = h^2(\Os_X) = 0$ was treated in \cite{Sze99}. Also, it is known for hyperk\"ahler manifolds \cite{Huy18}.  Although the existence of the coarse moduli space for smooth polarized K-trivial varieties has been known for a long time \cite{Vieh95},  a natural generalization even of property~(1) has not been explicitly addressed yet.
		\begin{intro-theorem}
			\label{th-intro: connected_components}
			Let $f: \X\to S$ be a smooth projective morphism between regular varieties, and every fiber $\X_s$ is K-trivial. Let $\Phi: S\to \Gamma \backslash \mathbb{D}$ be any geometric period map associated with a variation of polarized integral Hodge structures of weight $d = \dim \X/S$. Then for every point $\star\in \Gamma \backslash\mathbb{D}$ and every connected component $P$ of $(\Phi)^{-1}(\star)$ we have $\X_s \approx \X_t$ for all (closed) points $s, t\in P$.  
		\end{intro-theorem}

	Let us make several remarks regarding this theorem. First, there are many \textit{geometric period maps} (see \cref{constr: period_maps_smooth}) associated to a smooth projective family $\X/S$ depending on a choice of an ample  line bundle $\mathcal{A}$ on $\X/S$, and a choice of a sublattice of $H^d(\X_s, \Zi)_\text{t.f.}$, e.g. the full lattice $H^d(\X_s, \Zi)_\text{t.f.}$, the primitive lattice $H^d_\prim(\X_s, \Zi)$, and the transcendental one --- the lattice induced by the smallest rational Hodge sub-structure including $H^{d,0}(\X_s)$. Arguably, the latter is the most natural choice because it allows one to define period maps for lc-trivial fibrations (see \cref{const: main}). Second, the proof of \cref{th-intro: connected_components} uses only standard arguments from the deformation theory, in particular, infinitesimal Torelli theorem. Meanwhile, its generalization (\cref{th: main}) to lc-trivial fibrations involves \cite{Amb05} and recent results on a singular Beauville--Bogomolov decomposition \cite{BGL22,MW25}. Third, \cite[Section 7.3]{BFMT25} shows that property (2) holds after an appropriate stratification of the coarse moduli space $M_{d, N}$, and the natural maps to $M_{d, N}$ are locally constant for isotrivial families. So, \cref{th-intro: connected_components} is presumably known to experts. However, an unconditional generalization of property~(3) has not been known.

		\begin{intro-theorem}
			\label{th-intro: finiteness}
			Let $M_{d, N}$ be the coarse moduli space for polarized smooth $K$-trivial varieties over $\Ko$ of degree~$N$. Then for every K-trivial variety $X$ the set
			\begin{equation*}
				\{ \ [(X', A')]\in M_{d, N}: X'\sim_\textnormal{bir} X \ \}
			\end{equation*}
			is finite. In addition, there is stratification of $M_{d, N}$ by (locally closed) quasi-projective schemes $U_i$, on each of which there is a quasi-finite period map.
		\end{intro-theorem}
			This theorem heavily relies on our previous work \cite{Ser26}, and widely generalizes \cite{NN81, Ste85, Sze99, Huy18}. In addition, neither the present paper nor \cite{Ser26} uses main results  from \cite{BFMT25} on the b-semiampleness conjecture and on the compactifications for $M_{d, N}$. Nevertheless, we use algebraicity of period maps \cite{BBT23} throughout the paper. For applications, we prove
		
		\begin{intro-theorem}
			\label{th-intro: rigidity}
			Let $f: \X\to S$ be a smooth projective morphism between regular varieties, and every fiber $\X_s$ is K-trivial. Suppose that there is a Zariski dense subset $U\subseteq S$ of closed points such that $\X_s\sim_\text{bir} \X_t$ for all $s,t \in U$. Then $\X_s \approx \X_t$ for all closed points $s,t \in S$.
		\end{intro-theorem}
		
		Moreover, we prove analogues of \cref{th-intro: connected_components}, \cref{th-intro: finiteness}, \cref{th-intro: rigidity} (cf. \cref{th: main}, \cref{th: moduli_finite}, \cref{th: 0-pair_iso}) for families of projective klt Calabi--Yau pairs $(X, B)$, that is, $K_X+B\equiv 0$, and $B$ is an effective $\Q$-divisor. 
		Finally, we establish a birational specialization theorem whose proof proceeds along the same lines as in \cite{CLKT26}. We say a smooth proper variety $X$ is \textit{K-torsion} if $\omega_X$ is torsion, equivalently we have  $K_X\sim_\Q 0$ for a canonical divisor. We write $(X, 0)\sim_\text{bir} (X', 0)$ if $X\sim_\text{bir} X'$, and for any model dominating both $X, X'$ the pullbacks of $K_X$ and $K_{X'}$ coincide.
		
		\begin{intro-theorem}
			\label{th-intro: specialization}
			Let $f: \X\to S$ and $f': \X'\to S$ be  smooth proper morphisms between regular varieties  such that $K_\X \sim_{\Q, S} 0, \ K_{\X'} \sim_{\Q, S} 0$. Suppose that $(\X_\eta, 0)\sim_\textnormal{bir} (\X'_\eta, 0)$ for the generic point $\eta\in S$. Then $(\X_s, 0)\sim_\textnormal{bir} (\X'_s, 0)$ for all closed points $s\in S$.
		\end{intro-theorem}
		In an earlier draft, the analogue of \cref{th-intro: specialization} for klt pairs was used in the proofs of the main results, but it is no longer needed in the present version. We replace it by the Matsusaka--Mumford \cite{MM64} specialization argument which was generalized by Koll\'ar \cite{Kol85, Kol23}. Nevertheless, since the theorem may be of independent interest and useful in other contexts, we have retained it.
		\begin{remark}
			In \cite{LL26} authors investigated when an isotrivial over open dense subset $U\subseteq S$ smooth projective family $\X/S$ of good minimal models is isotrivial for K\"ahler morphisms over analytic polydiscs $S$. This could be used for an alternative proof of \cref{th-intro: connected_components}. 
		\end{remark}
		\begin{remark}
			In \cite[Section 9]{Shok13}, its author proved an analogue for property (3) in all dimensions by different methods based on the standard conjectures of the Minimal Model Program.
		\end{remark}
		\addtocontents{toc}{\protect\setcounter{tocdepth}{1}}
		\subsection*{Acknowledgments}
		The author is grateful to his advisor, Professor Shokurov, for his support and encouragement as well for several helpful discussions.
	\section{Preliminaries}
	\subsection{Notation and conventions.}
	Throughout, we work in the category of Noetherian schemes over $\mathbb{C}$ and follow the standard terminology \cite{IS05} unless stated otherwise. By a variety we mean an integral separated scheme of finite type over a field. On a variety, a prime divisor is understood in the Weil sense, i.e. an integral closed subscheme of codimension~$1$. In every part of the paper, for an $\mathbb{R}$-divisor $D$ the sum $D=\sum_i d_i D_i$ assumes that all $D_i$ are distinct prime divisors, and $d_i\in\mathbb{R}$.
	
	A \textit{pair} $(X,B)$ consists of a variety $X$ and an $\mathbb{R}$-divisor $B$ on $X$. We say that $(X,B)$ is a \textit{log pair} if $X$ is normal, and $K_X+B$ is an $\mathbb{R}$-Cartier $\R$-divisor. Suppose $B=\sum_i b_i B_i$, and $\Gamma\subseteq\mathbb{R}$ is a set. If $b_i\in\Gamma$ for all~$i$ then we write $B\in\Gamma$. Denote by $[0,1]$ the unit segment of real numbers.  The $\mathbb{R}$-divisor $B$ is called a \textit{boundary} if $B\in [0,1]$.	The pairs $(X,B)$ and $(X',B')$ are said to be \textit{log isomorphic} if there exists an isomorphism $\varphi:X\to X'$ such that $\varphi_*(B)=B'$; we then write $(X,B)\approx (X',B')$. We use the notation $\cong$ instead of $\approx$ if an isomorphism is natural. We write $g: (Y, D) \to (X, B)$ if $g: Y\to X$ is a morphism such that $K_Y + D = g^*(K_X + B)$.
	
	\begin{definition}
		\label{def: sing}
		Let $(X,B)$ be a log pair with $B=\sum b_i B_i$. We say that $(X,B)$ has (at worst) the following singularities when the corresponding inequalities hold:
		\[
		\begin{gathered}
			\begin{cases}
				\text{terminal (trm)} \\
				\text{canonical} \\
				\text{Kawamata log terminal (klt)}\\
				\text{log canonical (lc)}
			\end{cases}
			\Longleftrightarrow
			\text{dis}(X,B)
			\begin{cases}
				>0;\\
				\ge0;\\
				>-1 \ \text{and}\  b_i<1 \ \forall i;\\
				\ge -1 \ \text{and}\  b_i\le 1 \ \forall i.
			\end{cases}
		\end{gathered}
		\]
		Here, the \textit{discrepancy} of $(X, B)$ is given by $$\text{dis}(X, B) = \underset{E}{\inf}\{a(E; X, B): E \text{ is an exceptional prime divisor over } X\},$$ that is, $E$ runs through all the prime exceptional divisors of all birational morphisms $g: Y\to X$ from normal varieties $Y$, and the number $a(E; X, B)\in \mathbb{R}$ is defined by the formula $$K_Y = g^*(K_X + B) + \sum_E a(E; X, B)\, E.$$
		Similarly, $(X, B)$ is $\epsilon$\textit{-klt} (for some $\epsilon > 0$) if $\text{dis}(X, B) > -1 + \epsilon$ and $b_i < 1$ for all $i$.
	\end{definition}
	
	\begin{definition}
		\label{def: 0-pair}
		A log pair $(X,B)$ is called a \textit{weakly log canonical model (wlc model)} if the following assumptions hold:
		\begin{itemize}
			\item $X$ is a proper variety, and $B$ is a boundary.
			\item $(X,B)$ has log canonical singularities.
			\item $K_X+B$ is nef.
		\end{itemize}
		If $(X,B)$ has (at worst) klt (resp. terminal) singularities, then $(X,B)$ is called a \textit{wlc klt model} (resp. a \textit{wlc trm model}). If, in addition, $K_X+B\equiv 0$, then $(X,B)$ is called a \textit{0-pair}\footnote{In the literature, 0-pairs are also called Calabi–-Yau pairs.}.
	\end{definition}
	
	We shall write $X\sim_\text{bir} X'$ if the varieties $X, X'$ are birational. In the following definition we introduce an appropriate notion of birational equivalence for log pairs. 
	\begin{definition}
		\label{def: 0-class}
		We say that a log pair $(X_\alpha,B_\alpha)$ is \textit{crepant birationally equivalent} or \textit{0-equivalent} to $(X,B)$ if the varieties  $X_\alpha$ and $X$ are birationally equivalent, and for each common log resolution $Y$ of these pairs there is an $\R$-divisor $D$ on $Y$ such that the following equalities hold:
		\[
		\begin{gathered}
			K_Y+D=g^*(K_X+B)=g_\alpha^*(K_{X_\alpha}+B_\alpha),\\
			B=g_*D,\quad B_\alpha=(g_\alpha)_*D,
		\end{gathered}
		\]
		where $g:Y\to X$ and $g_\alpha:Y\to X_\alpha$ are the corresponding log resolutions. The class of all log pairs crepant birationally equivalent to $(X,B)$ is called the \textit{0-class} of the log pair $(X,B)$. In this situation, we write $(X, B)\sim_\bir (X_\al, B_\al)$
	\end{definition}
	\begin{remark}
		If $(X,B)$ is a wlc klt model (resp. 0-pair), then each log pair $(X_\alpha,B_\alpha)$ crepant birationally equivalent to $(X, B)$ with effective $\mathbb{R}$-divisor $B_\alpha$ is also a wlc klt model (resp. 0-pair).
	\end{remark}
	\begin{warning}
		The equivalence $X\sim_\text{bir} X'$ does not imply $(X, 0) \sim_\text{bir} (X', 0)$, e.g. a projective plane $X = \Pp^2_\Ko$ and its blow-up $X' = \text{Bl}_p \Pp^2_\Ko$ at some point $p\in \Pp^2_\Ko$.
.	\end{warning}

	\begin{example}
		\label{ex: flops_0-equivalent}
		Let $(X, B), (X^\prime, B^\prime)$ be projective klt wlc models, $t: (X_\text{trm}, B_\text{trm}) \to (X, B)$ and $t^\prime: (X_\text{trm}^\prime, B_\text{trm}^\prime)\to (X^\prime, B^\prime)$ be their $\Q$-factorial terminalizations respectively \cite[Corollary 1.4.3]{BCHM10}. Suppose there is a birational map $\varphi: X \dashrightarrow X^\prime$ such that $\varphi_*B_\text{trm} = B^\prime_\text{trm}$. Then $(X_\text{trm}, B_\text{trm})$ and $(X^\prime_\text{trm}, B^\prime_\text{trm})$ are 0-equivalent according to \cite[Theorem 1]{Kaw08}. Hence $(X, B), (X^\prime, B^\prime)$ are 0-equivalent as well.
	\end{example}
	
	Let $f:\X\to S$ be a morphism of schemes and $s\in S$ a point. We denote the scheme-theoretic fiber by $\X_s=\X\times_S s$. If $\A$ is an invertible sheaf on $\X$, then its restriction to the fiber $\X_s$ is denoted by $\A_s=\A|_{\X_s}$. When no ambiguity arises, we denote a morphism $f: \X \to S$ simply by $\X/S$. If $\A$ is (very) ample over $S$, we  say that $\A$ is an (very) ample invertible sheaf on $\X/S$.
	
	\begin{definition}
		Suppose $f: \X \to S$ is a proper flat morphism between varieties, and the base $S$ is regular. We say that $\X/S$ is a \textit{family of varieties} if every fiber $\X_s$ is a variety. If, in addition, the morphism $f$ is projective (resp. smooth), then $\X/S$ is called a projective (resp. smooth) family of varieties.
	\end{definition}
	
	For a family of varieties $\X/S$ and a proper variety $X$ we introduce the following sets:
	\begin{equation*}
		\begin{gathered}
			\Iso_X(\X/S) = \{s\in S: \X_s\times_\Ko \overline{\kappa(s)}\approx X\times_\Ko\overline{\kappa(s)}\},\\ \text{Bir}_X(\X/S) = \{s\in S: \X_s\times_\Ko\overline{\kappa(s)}\sim_\bir X\times_\Ko\overline{\kappa(s)}\},
		\end{gathered}
	\end{equation*}
	where $\kappa(s)$ is the residue field of the point $s\in S$, and $\overline{\kappa(s)}$ is its algebraic closure.
	\begin{definition}
		\label{def: isotriviality}
		Let $\X/S$ be a family of varieties. We say $\X/S$ is \textit{isotrivial} (resp. \textit{birationally isotrivial}) if $\Iso_X(\X/S) = S$ (resp. $\text{Bir}_X(\X/S) = S$) for some proper variety $X$. Also, $\X/S$ is \textit{potentially isotrivial} (resp. \textit{potentially birationally isotrivial}) if for some proper variety $X$ the set $\Iso_X(\X/S)$ (resp. $\text{Bir}_X(\X/S)$) includes a dense subset of closed points in $S$. 
	\end{definition}
	\begin{warning}
		The notion of \textit{isotriviality} varies considerably across the literature (compare with \cite[Proposition 2.610]{Ser06} and \cite[Theorem 1.1]{BBG16}). 
	\end{warning}
	
	Consider a family of varieties $\X/S$ and an $\mathbb{R}$-divisor $\B=\sum b_i\B_i$ on $\X$. In general, the restriction of $\B$ to a fiber $\X_s$ is not defined, and even when it is, the definition is not straightforward. For our purposes it is convenient to work with \textit{elementary families} on which $\B_s$ is defined.
	
	\begin{definition}
		\label{def: elementary_family}
		Let $(\X,\B)$ be a pair and $f:\X\to S$ be a family of varieties. Suppose $\B=\sum_{i} b_i\B_i$ is a decomposition of the $\mathbb{R}$-divisor into distinct prime divisors. We say that $(\X/S,\B)$ is an \textit{elementary family} if for every point $s\in S$ the following hold:
		\begin{enumerate}
			\setcounter{enumi}{-1}
			\item The fiber $\X_s$ is a normal variety.
			\item The restriction $f|_{\B_i}:\B_i\to S$ is flat for all $i$.
			\item The closed subscheme $(\B_i)_s=\B_i\times_S s$ is a prime divisor on $\X_s$ for all $i$.
			\item The prime divisors $(\B_i)_s$ and $(\B_j)_s$ are distinct whenever $i\ne j$.
		\end{enumerate}
		In this situation, we put $\B_s=\sum_i b_i(\B_i)_s$. If, in addition, the morphism $f$ is projective, then $(\X/S,\B)$ is called a \textit{projective elementary family}.
	\end{definition}
	Now, we can generalize \cref{def: isotriviality} to log pairs. For an elementary family $(\X/S, \B)$ and a proper log pair $(X, B)$ we set:
	\begin{equation*}
		\begin{gathered}
			\Iso_{(X, B)}(\X/S, B) = \{s\in S: (\X_s, \B_s)\times_\Ko \overline{\kappa(s)}\approx (X, B)\times_\Ko\overline{\kappa(s)}\},\\ \text{Bir}_{(X, B)}(\X/S, \B) = \{s\in S: (\X_s, \B_s)\times_\Ko\overline{\kappa(s)}\sim_\bir (X, B)\times_\Ko\overline{\kappa(s)}\}.
		\end{gathered}
	\end{equation*}
	
	\begin{definition}
		\label{def: isotriviality_pairs}
		Let $(\X/S, \B)$ be an elementary family. We say $(\X/S, \B)$ is \textit{isotrivial} (resp. \textit{birationally isotrivial}) if $\Iso_{(X, B)}(\X/S, \B) = S$ (resp. $\text{Bir}_{(X, B)}(\X/S, \B) = S$) for some proper log pair $(X, B)$. Also, $(\X/S, \B)$ is \textit{potentially isotrivial} (resp. \textit{potentially birationally isotrivial}) if  for some proper log pair $(X, B)$ the set $\Iso_{(X, B)}(\X/S, \B)$ (resp. $\text{Bir}_{(X, B)}(\X/S, \B)$) includes a dense subset of closed points in $S$. 
	\end{definition}
	For applications, we will restrict ourselves to a subclass of elementary families for which all fibers have prescribed singularities, and their log canonical divisors are compatible with the log canonical divisor of the total family. More precisely we have
	
	\begin{definition}
		\label{def: elementary_family(very)}
		Suppose $(\X/S, \B)$ is an elementary family. We say $(\X/S, \B)$ is an \textit{elementary log family} (resp. \textit{elementary klt family}) if $(\X, \B)$ is a log pair (resp. klt log pair), $(\X_s, \B_s)$ is a log pair (resp. klt log pair), and $(K_\X + \B)|_{\X_s} = K_{\X_s} + \B_s$ for all points $s\in S$. An \textit{elementary lc family} is defined similarly.
	\end{definition}
	\begin{remark}
		An elementary klt/lc family is locally stable in the sense of Koll\'ar \cite{Kol23}.
	\end{remark}
	\begin{definition}
		Let $(\X/S, \B)$ be an elementary family, $\B = \sum_{i\in I} b_i \B_i$. We say that $(\X/S, \B)$ \textit{is log smooth over} $S$ if $\X/S$ is smooth, $\B$ has relative snc (simple normal crossing) support, and $\bigcap_{j\in J}\B_{j}/S$ is smooth for all $J\subseteq I$. Here $\bigcap_{j\in J}\B_{j}$ is a scheme with the structure of a reduced closed subscheme.
	\end{definition}
	\subsection{Burnside groups}
	In \cite{CLKT26}, the authors introduced Burnside rings for logarithmic forms. It is mentioned in \textit{loc. cit.} that these notions could be adapted to pluriforms. To make our exposition more self-contained, we spell out these adaptations, state main results, and sketch some proofs.
	
	From now and on, we fix a field $k$ of characteristic $0$. Let $X$ be a proper integral scheme over $k$, $\dim X = d$. By $\Omega_{k(X)}^d$ we denote the $k(X)$-vector space of rational differential $d$-forms on $X$. For every $m\in \mathbb{Z}_{>0}$ the elements of $(\Omega_{k(X)}^{d})^{\otimes m}$ are called \textit{(rational) volume $m$-pluriforms}. For $\omega\in (\Omega^d_{k(X)})^{\otimes m}$ by $\text{div}(\omega) = D_{\ge 0} - D_{\le 0}$ we denote the divisor of zeros $D_{\ge 0}$ and poles $D_{\le 0}$  where the latter is called \textit{the polar divisor for $\omega$}.  We say that $\omega\in (\Omega_{k(X)}^{d})^{\otimes m}$ is  a \textit{logarithmic (rational) volume $m$-pluriform on $X$} if the form $g^*\omega$ has poles of order at most $m$ for all proper birational morphisms $g: X'\to X$ such that the polar divisor for $g^*\omega$ has simple normal crossing\footnote{By \cite[Lemma 2.4]{CLKT26}, it is sufficient to check the condition on poles for one proper birational morphism $g\colon X'\to X$ such that the polar divisor for $g^*\omega$ has simple normal crossing.}.

	\begin{definition}
		Let $S$ be a scheme over $k$, $d\in \mathbb{Z}_{\ge 0}, m\in \mathbb{Z}_{>0}$. By $\Burn_d(S/k)$ we denote the free abelian group generated by isomorphism classes of pairs $(X/S, \omega)$ such that
		\begin{itemize}
			\item
			$X$ is a smooth proper variety over $k$ of dimension $d$ with a morphism $X\to S$,
			\item 
			$\omega$ is a logarithmic (rational) volume $m$-pluriform on $X$,
		\end{itemize}
		subject to the smallest equivalence relation such that $(X/S, \omega) \sim (X'/S, \omega')$ if and only if there is a proper birational morphism  $g: X'\to X$ over $S$ with $g^*\omega = \omega'$ (compare with \cref{def: 0-class}). For every morphism of schemes $h: S\to T$ over $k$ there is a natural morphism of groups $$h_*: \Burn_d(S/k) \to \Burn_d(T/k).$$
		
		We write $[X/S, \omega]\in \Burn_d(S/k)$ for the class of a pair $(X/S, \omega)$.
		The direct sum $$\Burn(S/k) = \bigoplus_{d\ge 0} \Burn_d(S/k)$$ view as a graded abelian group is called a \textit{Burnside group of index $m$}. When $S =\text{Spec}\, k \to \text{Spec}\, k$ is the identity morphism, we write $\Burn(S/k):= \Burn (k)$.
	\end{definition}
	
	\begin{proposition}
		\label{prop: ext_fields}
		Let $k'/k$ be an extension. Then there is a natural morphism of groups $$\Burn(S/k) \to \Burn(S'/k'),$$
		where $S' = S\times_k k'$
	\end{proposition}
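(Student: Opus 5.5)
The plan is to define the map on generators by base change and then check that it respects the defining relations. Given a generator $(X/S,\omega)$ with $X$ a smooth proper $k$-variety of dimension $d$ and a morphism $X\to S$, set
\[
X' = X\times_k k', \qquad X'\to S',
\]
and let $\omega'$ be the pullback of $\omega$ along the projection $X'\to X$. The subtlety is that $X'$ need not be integral when $k$ is not algebraically closed in $k(X)$. So I would send $(X/S,\omega)$ to the sum of the classes $(X'_j/S',\omega'|_{X'_j})$ over the irreducible components $X'_j$ of $X'$. These components are disjoint, since $X'$ is smooth over $k'$, and each is a smooth proper $k'$-variety of dimension $d$. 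Extending $\Zi$-linearly gives a homomorphism on the free abelian groups.

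Next I would check that each $\omega'|_{X'_j}$ is again a logarithmic volume $m$-pluriform. Choose a proper birational $g\colon \widetilde X\to X$ such that the polar divisor of $g^*\omega$ is snc and its poles have order at most $m$; the footnote's reference to \cite[Lemma 2.4]{CLKT26} says one such model suffices. The base change $\widetilde X_{k'}\to X'$ restricts to a proper birational morphism over each component. In characteristic $0$ the base change of a smooth variety with an snc divisor is again smooth with an snc divisor. Pole orders are preserved, because the extension $k\to k'$ is flat and the prime components of the polar divisor pull back to reduced divisors. Hence the pole condition holds on one snc model of each $X'_j$, and by the same lemma $\omega'|_{X'_j}$ is logarithmic.

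Then I would check compatibility with the relation. If $g\colon X_1\to X_2$ is proper birational over $S$ with $g^*\omega_2=\omega_1$, its base change $g'$ is proper and birational componentwise, and it induces a bijection between the components of $X_{1,k'}$ and $X_{2,k'}$ compatible with the pullback of forms. So the generators related by $g$ map to sums of pairwise related generators, the map descends to $\Burn_d(S/k)\to\Burn_d(S'/k')$ for every $d$, and summing over $d$ gives the map on $\Burn(S/k)$. Naturality in $S$, namely compatibility with $h_*$, and functoriality in towers of field extensions both follow from transitivity of fiber products.

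The main obstacle is the logarithmic condition under base change, specifically that reducedness of the polar divisor and the snc property survive the field extension. This is where characteristic $0$ enters: the extension is separable, so reduced schemes stay reduced and smooth schemes stay smooth. A secondary point is the decomposition into components; if one prefers to avoid it, one can note that every generator may be replaced by one with $k$ algebraically closed in $k(X)$ after regarding $X$ over that closure, but the componentwise sum is cleaner.
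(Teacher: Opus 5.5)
Your proposal is correct and follows the same route as the paper: base change $X$ to $k'$, pull back $\omega$, and send $(X/S,\omega)$ to the sum of $(Y,\omega'|_Y)$ over the irreducible components $Y$ of $X\times_k k'$. You also verify two things the paper only asserts or leaves implicit: that the logarithmic condition survives, because snc divisors and reducedness are preserved under separable base change, and that the defining birational relation is respected, because components correspond bijectively under a proper birational $g$.
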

	\begin{proof}
		Let $(X, \omega)$ be a smooth proper integral scheme over $k$ equipped with a logarithmic volume $m$-pluriform. Let $X' = X\times_k k'$ be its base change to $k'$, and $\omega'$ be the volume $m$-pluriform obtained by base change. Then for every irreducible component $Y$ of $X'$ we have that $(Y, \omega'|_Y)$ is a logarithmic volume $m$-pluriform. Hence, we define $(X, \omega) \mapsto \sum_{Y\subset X'}(Y, \omega'|_Y)$ where the finite sum runs over all irreducible components of $X'$.
	\end{proof}
	
	Let $C$ be a regular integral curve over $k$, let $o\in C$ be a closed point such that $\kappa(o) = k$. By $\eta$ we denote the generic point of $C$. Put $K = k(C)$, $R = \Os_{o, C}$, and fix a uniformizer $t\in R$. Consider a smooth proper variety $X$ over $K$ with a logarithmic volume $m$-pluriform $\omega_\eta$ on $X$, and $\dim X = d$. Let $\mathscr{X}/C$ be a regular flat proper model for $X$, $\Delta = (\Xm_o)_\text{red}$ be its reduced special fiber, and $\mathscr{B}$ be a $\Q$-divisor on $\Xm$ that has relative snc, and $m\mathscr{B}$ is Cartier. We assume that the $\Q$-divisor $\Delta + \Bm$ has snc, and $(\Xm, \Delta + \Bm)$ is log canonical.
	\begin{definition}
		A (rational) relative volume $m$-pluriform on $\Xm/C$ is \textit{logarithmic} with respect to $\Delta + \Bm$ if it is (locally) the image of a logarithmic volume $m$-pluriform $\tilde{\omega}\in (\Omega^d_{\Xm/k})^{\otimes m}$ with poles contained in $m(\Delta + \Bm)$ under the natural morphism $(\Omega^d_{\Xm/k})^{\otimes m} \to (\Omega^d_{\Xm/C})^{\otimes m}$.
	\end{definition}
	
	Let $\Delta = \sum_{i\in I} \Delta_i$ and $\Bm = \sum_{j\in J} b_j\Bm_j$. For any $I'\subseteq I$ (resp. $J'\subseteq J$) we put $\Delta_{I'} = \bigcap_{i\in I'} \Delta_i$ (resp. $\Bm_{J'} = \bigcap_{j\in J'} \Bm_j$). Consider a relative logarithmic  with respect to $\Delta + \Bm$ volume $m$-pluriform $\omega$ on $\Xm/C$, and a volume $m$-pluriform $\omega$ on $\Xm$ defined locally by $\omega'  = \tilde{\omega}\wedge dt/t^m$ where $\tilde{\omega}$ is any local lift of $\omega$ to $\Xm/k$. Then there are subsets $I_0\subseteq I, J_0\subseteq J$ such that the polar divisor for $\omega'$ is given by $m(\sum_{i\in I_0} \Delta_i + \sum_{j\in J_0}b_jB_j)$.
	
	Now we define an element of $\Burn_d (\Xm_o/k)$ by the following formula:
	\begin{equation*}
		\rho_t(\Xm, \omega) = \sum_{\substack{\emptyset\neq I'\subseteq I_0,\\ J'\subseteq J_0}} (-1)^{|I'| + |J'|-1 }\rho_{\Delta_{I'}\cap \Bm_{J'}}(\Xm, \omega)
	\end{equation*}
	where $\rho_{\Delta_{I'}\cap \Bm_{J'}}$ is the residue map locally given by $\rho_{\Delta_{I'}\cap \Bm_{J'}}(\tilde{\omega}\wedge dt/t^m) = \tilde{\omega}|_{\Delta_{I'}\cap \Bm_{J'}}$.
	
	\begin{proposition}[{\cite[Proposition 8.10]{CLKT26}}]
		\label{prop: independence_model}
		Let $\mathscr{Z}\subseteq \Xm$ be an irreducible regular closed subscheme of $\Xm$ which is transverse to $\Delta + \Bm$. Let $g: \Xm'\to \Xm$ be a blow-up of $\Xm$ along $\mathscr{Z}$. The form $g^*\omega$ is a relative logarithmic with respect to $g^*(\Delta + \Bm)$ volume $m$-pluriform on $\Xm'/C$. Moreover, we have
		\begin{equation*}
			g_*\rho_t(\Xm', g^*\omega) = \rho_t (\Xm, \omega)\in \Burn_d (\Xm_o/k).
		\end{equation*}
 	\end{proposition}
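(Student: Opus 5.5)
The plan is a local computation in coordinates, adapting \cite[Proposition 8.10]{CLKT26} from logarithmic forms to logarithmic $m$-pluriforms.

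\emph{Step 1: local normal form.} Let $c=\operatorname{codim}\mathscr{Z}$. Since $\mathscr{Z}$ is regular and transverse to the snc divisor $\Delta+\Bm$, near any point of $\mathscr{Z}$ we choose regular parameters $x_1,\dots,x_n$ of $\Xm$ such that:
\begin{itemize}
\item the components of $\Delta+\Bm$ through the point are $\{x_i=0\}$ for $i\le r$;
\item $t=u\prod_{i\in I}x_i^{N_i}$ with $u$ a unit;
\item $\mathscr{Z}=\{x_{i}=0,\ i\in K\}$, where $K$ is a set of indices such that $\mathscr{Z}\not\subseteq$ any component of $\Delta+\Bm$ not containing it; transversality means $\mathscr{Z}$ meets every stratum $\Delta_{I'}\cap\Bm_{J'}$ transversally.
\end{itemize}
Locally $\omega'=\tilde\omega\wedge dt/t^m = h\,(dx_1\wedge\dots\wedge dx_n)^{\otimes m}/\prod_{i\in I_0\cup J_0}x_i^{m b_i}$, with $h$ regular and $b_i=1$ for $\Delta$-components.

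\emph{Step 2: the pullback is logarithmic.} Let $E$ be the exceptional divisor and let $\beta=\sum_{i\in K}b_i$ be the sum of coefficients of the components of $\Delta+\Bm$ containing $\mathscr{Z}$. The discrepancy formula gives
\[
g^*(K_\Xm+\Delta+\Bm)=K_{\Xm'}+g^{-1}_*(\Delta+\Bm)+(\beta-c+1)E.
\]
The lc hypothesis together with transversality gives $\beta-c+1\le 1$, so $g^*\omega'$ has poles of order at most $m(\beta-c+1)\le m$ along $E$. The divisor $g^*(\Delta+\Bm)$ is again snc, and $E$ occurs in $g^*\Delta$ exactly when $\mathscr{Z}$ lies in some $\Delta_i$. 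Hence $g^*\omega$ is a relative logarithmic pluriform for the new boundary. This settles the first claim.

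\emph{Step 3: comparing $\rho_t$.} We expand $\rho_t(\Xm',g^*\omega)$ over the strata of $\Xm'$.
\begin{itemize}
\item Strata not involving $E$ are strict transforms $\widetilde{\Delta_{I'}\cap\Bm_{J'}}$. Each is a blow-up of $\Delta_{I'}\cap\Bm_{J'}$ along the regular center $\mathscr{Z}\cap\Delta_{I'}\cap\Bm_{J'}$, or is isomorphic to it. The residue forms pull back, so by the defining relation of $\Burn_d(\Xm_o/k)$ these strata push forward to the same classes as before.
\item Strata involving $E$ occur only when $E$ is polar for $g^*\omega'$, that is, when $\beta-c+1>0$ with the right coefficient. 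These strata are projective bundles over $\mathscr{Z}\cap$(strata), and $E$ enters with coefficient $\beta-c+1$.
\end{itemize}
Using the standard identification of $E\cap\widetilde{\Delta_{I''}}$ with $\mathbb{P}$-subbundles, we group the new terms by the underlying stratum of $\mathscr{Z}$. Their signed sum must vanish in $\Burn_d$.

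\emph{Main obstacle.} The main obstacle is this cancellation. For logarithmic forms it is the inclusion–exclusion identity on the coordinate simplex of $\mathbb{P}^{c-1}$, combined with the fact that a residue of a pluriform on a projective bundle with poles along $\ge$ (fibre dimension $+1$) hyperplanes in general position is birational to a product with a torus form. Such classes cancel pairwise under the relation $[X\times\mathbb{P}^1,\ \cdot\wedge (dz/z)^m]$ versus the boundary strata.

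I would first reduce to the case where $\mathscr{Z}$ is contained in exactly the components indexed by $K$, working on the generic point of $\mathscr{Z}\cap\Delta_{I'}\cap\Bm_{J'}$. There I would check the identity
\[
\sum_{\emptyset\ne L}(-1)^{|L|-1}[\text{stratum}_L]=[\text{original}]
\]
on the toric model $\mathbb{A}^c\to$ its blow-up. This is the computation done in \cite[Proposition 8.10]{CLKT26}, and the only change is that the exponents are multiplied by $m$.
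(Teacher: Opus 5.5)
Your strategy is the paper's: the paper gives no argument of its own and simply imports \cite[Proposition 8.10]{CLKT26} for $m$-pluriforms. Your Step 2, bounding the pole order along $E$ by $m(\beta-c+1)\le m$, is fine. Step 3, however, has a bookkeeping error in exactly the case where $E$ matters. Take $\beta=c$ with all components through $\mathscr Z$ lying in $\Delta$, i.e.\ $\mathscr Z$ is locally a stratum $\Delta_K$ with $|K|=c\ge 2$. Then the old strata $\Delta_{I'}$ with $I'\supseteq K$ lie inside $\mathscr Z$ and have \emph{empty} strict transform, because the $\widetilde{\Delta}_k$, $k\in K$, no longer meet. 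So not every old stratum reappears, and the $E$-terms do not cancel. For $\mathscr Z=\Delta_1\cap\Delta_2$ one gets $[E]-[(E\cap\widetilde{\Delta}_1)\times\Pp^1]-[(E\cap\widetilde{\Delta}_2)\times\Pp^1]=-[\mathscr Z\times\Pp^1]$, with fibrewise form $(dz/z)^{\otimes m}$. This is precisely the vanished old term $-[(\Delta_1\cap\Delta_2)\times\Pp^1]$. Your closing identity, that the new terms over $\mathscr Z$ equal the original term, is the correct statement. The claim that the $E$-terms' signed sum must vanish is true only when some component through $\mathscr Z$ is a $\Bm_j$ with $b_j=1$.

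Second, the claim that the only change is multiplying exponents by $m$ misses a configuration with no log-form analogue. If $\mathscr Z=\Delta_i\cap\Bm_j$ with $0<b_j<1$, then $E$ is vertical with pole order $m(\beta-c+1)=mb_j\in(0,m)$. For $m=1$, the setting of \cite{CLKT26}, pole orders along $E$ are integers $\le 1$, so this never occurs. Such an $E$ breaks the normalization used to define $\rho_t$. It is a component of the new reduced special fibre that is not polar of order $m$, and being vertical it cannot be absorbed into the horizontal boundary. You must argue separately that an $m$-pluriform has zero residue along a divisor of pole order $<m$, so $E$ contributes nothing. Since $\beta<c$, no $\Delta$-stratum lies in $\mathscr Z$, so the $\Delta$-strata part of $\rho_t$ is unchanged. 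But the terms with $J'\neq\emptyset$ that you carry along have nothing to match them. The old stratum $\Delta_i\cap\Bm_j$ now has empty strict transform, and $E$ supplies no replacement. So the convention for those terms (drop them, or show they vanish) has to be settled before the toric computation of \cite{CLKT26} can be quoted.
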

 	
 	Recall that $\omega_\eta$ is a logarithmic volume $m$-pluriform on $X$ over $K$. As in \cite[Section 8.13]{CLKT26}, after a finite base change $C'\to C$ there is an integer $r = r(\omega_\eta)$ such that $t^{r}\omega_\eta$ extends to a relative logarithmic volume $m$-pluriform $\omega$ on $\Xm/C$. From \cref{prop: independence_model} and the weak factorization theorem \cite{AKMW02} it follows that the element $h_*\rho_t(\Xm, \omega) \in \Burn_d(k)$ is independent of the choice of a model $\Xm/C$ where $h: \Xm_o\to k$ is the structure morphism. Hence, we obtain
	
	\begin{theorem}[{\cite[Theorem 8.14]{CLKT26}}]
		\label{th: specialization}
		Let $K$ be a field of fractions  of a discrete valuation ring $R$ with residue field $k$. Then for any uniformizer $t\in R$ there is a morphism of groups $$\rho_t: \Burn(K)\to \Burn(k).$$
	\end{theorem}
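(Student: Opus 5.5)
We will define $\rho_t$ on generators and then check that it respects the relations defining $\Burn(K)$. Let $(X,\omega_\eta)$ be a smooth proper variety over $K$ of dimension $d$ with a logarithmic volume $m$-pluriform.

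\emph{Defining $\rho_t$ on generators.}
\begin{enumerate}
\item Realize $R$ as the local ring $\Os_{o,C}$ of a regular curve $C$ over $k$ with $\kappa(o)=k$. If this is not literally possible, work with $\Spec R$ directly; all arguments are local over the closed point.
\item Spread $X$ out to a proper flat model $\Xm/C$.
\item By resolution of singularities in characteristic $0$, arrange that $\Xm$ is regular and that $\Delta+\Bm$ has snc. Here $\Bm$ is the closure of the polar divisor of $\omega_\eta$, with the coefficients prescribed by $\omega_\eta$.
\item By semistable reduction after a finite base change $C'\to C$, arrange that $t^{r}\omega_\eta$ extends to a relative logarithmic form $\omega$ with respect to $\Delta+\Bm$, as recalled before the statement.
\item Set
\[
\rho_t[X,\omega_\eta]:=h_*\rho_t(\Xm,\omega)\in\Burn_d(k).
\]
\end{enumerate}
Extend additively to $\Burn(K)$. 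To keep the dependence on $t$ honest, the base change should be normalized, e.g.\ by taking $C'$ with uniformizer $t^{1/e}$. One must then check that the resulting class does not depend on the choice of $e$. This can be done by comparing two base changes through a common refinement, using \cref{prop: ext_fields} for the residue-field bookkeeping; the residue field stays $k$ for a totally ramified extension.

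\emph{Independence of the model.} Any two such models, after a common base change, are connected by a sequence of blow-ups and blow-downs along regular centers transverse to the boundary. This is the weak factorization theorem \cite{AKMW02} in its logarithmic, relative version over $C$, applied to a birational map that is an isomorphism on the generic fiber. \Cref{prop: independence_model} then shows that $g_*\rho_t$ is invariant under each such step. Pushing forward to $\Spec k$, the class $h_*\rho_t(\Xm,\omega)$ is independent of the model.

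\emph{Compatibility with the defining relation.} Suppose $g:X'\to X$ is a proper birational morphism over $K$ with $g^*\omega_\eta=\omega'_\eta$.
\begin{enumerate}
\item Choose a model $\Xm$ of $X$ as above.
\item Resolve the closure of the graph of $g$ to obtain a model $\Xm'$ of $X'$ mapping to $\Xm$.
\item The pullback of the extended form is again relative logarithmic with respect to the pulled-back boundary.
\item Hence both classes are computed on $\Xm'$, and by the model independence just established they agree.
\end{enumerate}
Therefore $\rho_t$ descends to the quotient and is a group homomorphism, since it was defined additively on generators.

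\emph{Main obstacle.} The hardest step is the weak factorization in the required form. It must be relative over $C$, be an isomorphism over $\eta$, and keep every center regular and transverse to $\Delta+\Bm$ so that \cref{prop: independence_model} applies, all while the pole orders of $\omega$ are tracked. My first approach would be Abramovich--Karu--Matsuki--W\l odarczyk's version for snc pairs, applied to the toroidal embedding given by the total boundary $\Delta+\Bm$. After a further base change, one can then assume all intermediate models are toroidal. The remaining delicate point is the effect of the twist $t^{r}$ and of the choice of base change on the class. I expect it to be handled by a direct local computation of the residue $\tilde\omega\wedge dt/t^m\mapsto\tilde\omega|_{\Delta_{I'}\cap\Bm_{J'}}$, following \cite[Section 8]{CLKT26}.
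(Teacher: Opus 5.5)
Your outline is the paper's: define $\rho_t$ on a generator through a regular model with snc $\Delta+\Bm$ (after a finite base change and the twist by $t^{r}$), then get well-definedness from \cref{prop: independence_model} together with weak factorization. However, the step that checks the defining relation is not justified as written. Let $g\colon X'\to X$ be proper birational with $g^*\omega_\eta=\omega'_\eta$, and let $\Xm'\to\Xm$ be the induced morphism of models. Then $\Xm'$ is a model of $X'$, not of $X$. Your model independence only compares models related by birational maps that are isomorphisms over $\eta$. So it says nothing about whether $h_*\rho_t(\Xm',g^*\omega)=h_*\rho_t(\Xm,\omega)$, and the phrase ``both classes are computed on $\Xm'$'' is exactly the statement that needs proof.

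The repair uses the same ingredients, and the paper's one-line argument tacitly relies on it: \cref{prop: independence_model} does not require the center $\mathscr{Z}$ to lie in the special fiber, so blow-ups along horizontal centers are covered as well. There are two ways to use this.
\begin{enumerate}
\item Factor $\Xm'\dashrightarrow\Xm$ by weak factorization relative to a boundary containing $\Delta$, $\Bm$ and the exceptional locus of $g$, so that every center is transverse to $\Delta+\Bm$.
\item Use weak factorization over $K$, with centers having normal crossings with the polar divisor, to reduce to the case where $g$ is a blow-up along a smooth center $Z$. Then modify $\Xm$ over the closed point so that the closure $\overline{Z}$ is regular and transverse to $\Delta+\Bm$. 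Apply the proposition to $\mathrm{Bl}_{\overline{Z}}\Xm$, which is a model of $X'$.
\end{enumerate}

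A smaller point concerns independence of the ramification index $e$. This has nothing to do with \cref{prop: ext_fields}, since a totally ramified extension leaves $k$ unchanged. What it requires is comparing a semistable model with a resolution of its normalized base change, which is only toroidal. You assert this rather than prove it, and the paper likewise leaves it to \cite[Section 8.13]{CLKT26}.
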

	
	\begin{theorem}
		\label{th: bir_spec}
		Let $(\X/S, \B)$, and $(\X'/S, \B')$ be elementary lc families such that both $(\X, \B), (\X', \B')$ are log smooth over $S$, and $K_\X + \B \sim_{\Q, S} 0, \ K_{\X'} + \B'\sim_{\Q, S} 0$. Suppose that $(\X_\eta, \B_\eta)\sim_\text{bir}(\X_\eta', \B'_\eta)$ for the generic point $\eta\in S$. Then $(\X_s, \B_s)\sim_\text{bir}(\X_s', \B'_s)$ for all closed points $s\in S$.
	\end{theorem}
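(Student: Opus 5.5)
We reduce to the case where $S$ is a regular curve, and then apply the specialization morphism $\rho_t$ of \cref{th: specialization} to a Burnside class built from the pairs.

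\textbf{Reduction to a curve.} Fix a closed point $s\in S$. Choose a regular (after normalization) integral curve $C\to S$ whose image passes through $s$ and meets the open locus where the crepant birational map of generic fibers spreads out. Base-changing both families to $C$ keeps the hypotheses: the families stay log smooth elementary lc families with $K+\B\sim_{\Q,C}0$, and the generic fibers over $C$ stay 0-equivalent.

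For the last point, the birational map $\X_\eta\dashrightarrow\X'_\eta$ and its crepancy extend over a dense open $U\subseteq S$. Indeed, take a common log resolution over $\eta$ and spread it out over $U$; then all fibers over $U$ are 0-equivalent. So we may take $C$ to be a curve whose generic point maps into $U$. This reduces us to $S=C$, with $o\in C$ the point over $s$ and uniformizer $t$.

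\textbf{Encoding pairs as pluriforms.} Pick $m$ with $m(K_\X+\B)\sim 0$ over $C$, shrinking $C$ around $o$. This gives a rational relative volume $m$-pluriform $\omega$ on $\X/C$ with $\text{div}(\omega)=-m\B$. Since the pair is log smooth and lc with coefficients at most $1$, $\omega$ is logarithmic in the sense above; likewise $\omega'$ on $\X'$. The 0-equivalence of generic fibers, after adjusting $\omega'$ by an element of $K^*$ (possible because both generators differ by a unit on a common model, and $H^0$ of the trivialised bundle is $K$), gives
\[
[\X_\eta,\omega_\eta]=[\X'_\eta,\omega'_\eta]\in\Burn_d(K).
\]
The scalar ambiguity is a unit times a power of $t$. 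After the finite base change used to define $\rho_t$, it only shifts the normalization $t^r$, which we match on both sides.

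\textbf{Specializing and reading off 0-equivalence.} Apply $\rho_t$. Here $\X/C$ is smooth with reduced special fiber $\Delta=\X_o$, and $\Bm=\B$ has relative snc, so the model is admissible. Since $\omega$ extends with no pole along $\Delta$ beyond $dt/t^m$, we get $I_0=\{o\}$. The formula then gives
\[
\rho_t[\X_\eta,\omega_\eta]=\sum_{J'\subseteq J_0}(-1)^{|J'|}[\X_o\cap\B_{J'},\ \mathrm{res}\,\omega].
\]
The term $J'=\emptyset$ is $[\X_o,\omega_o]$, where $\omega_o$ is the pluriform encoding $(\X_o,\B_o)$. All other terms are supported on strata of dimension less than $d$, and these land in other graded pieces $\Burn_{d'}$ because the residue lowers dimension. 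Hence projecting to $\Burn_d(k)$ yields $[\X_o,\omega_o]=[\X'_o,\omega'_o]$. Equality in a free abelian group on classes means the two generators are equivalent. So there is a chain of proper birational morphisms pulling back $\omega_o$ to $\omega'_o$, which is exactly crepant birational equivalence of $(\X_o,\B_o)$ and $(\X'_o,\B'_o)$, since pulling back the form records $K+\B$ on models.

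\textbf{Main obstacle.} The delicate step is the compatibility of normalizations and grading. We must ensure that the scalar $t^{r}$ needed to extend $\omega_\eta$ is the same for both sides after the base change, and that lower-dimensional residue terms cannot cancel the top-dimensional class. Both fibers have $K+B\sim_\Q0$ fiberwise, so $r=0$ works for both once $\omega,\omega'$ are chosen as nowhere-vanishing generators over $C$, and the dimension grading separates the strata terms. We would check this carefully first.
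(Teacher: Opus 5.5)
\textbf{Gap: isolating $[\X_o,\omega_o]$ by grading.} Your overall route is the paper's: encode the pairs as logarithmic $m$-pluriforms, specialize with $\rho_t$, and use that $\Burn_d(k)$ is free on crepant-birational classes. Your reduction to a curve through $s$, with generic point in the open set where the 0-equivalence spreads out, is a fine and somewhat cleaner substitute for the paper's induction on $\dim S$.

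The gap is where you isolate $[\X_o,\omega_o]$ by discarding the terms with $J'\neq\emptyset$, on the grounds that they ``land in other graded pieces $\Burn_{d'}$''. That is false here: the formula for $\rho_t(\Xm,\omega)$ defines an element of $\Burn_d(\Xm_o/k)$. So residues along lower-dimensional strata enter only after stabilization by a toric factor, as with $D_I\times\mathbb{A}^{|I|-1}$ in Kontsevich--Tschinkel. This is forced by blow-up invariance. Blowing up $\X_o\cap\B_j$ with $b_j=1$ creates a $\Pp^1$-bundle $E$. The class of $E$ must cancel, in the same degree, against the stratum where $E$ meets the strict transform of $\X_o$, which is isomorphic to $\X_o\cap\B_j$.

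Consequently these terms are classes in $\Burn_d(k)$ on the same footing as $[\X_o,\omega_o]$, and they can coincide with it. Take $(\X_o,\B_o)=(\Pp^1\times Z,(\{0\}+\{\infty\})\times Z)$ with $K_Z\sim 0$ and $m=1$. Both terms with $|J'|=1$ equal $[\Pp^1\times Z,\tfrac{dz}{z}\wedge\omega_Z]=[\X_o,\omega_o]$, using $z\mapsto z^{-1}$ to fix the sign. So the coefficient of that generator in your expression is $-1$, not $+1$. Equality of the two specializations therefore does not yield $[\X_o,\omega_o]=[\X'_o,\omega'_o]$ by grading or freeness alone. The stratum terms on either side can coincide with, or cancel against, the top classes.

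\textbf{What the paper uses instead.} The paper's proof uses that, for a model smooth over the base with reduced irreducible special fibre, the specialization is the class of the special fibre on the nose:
\[
\rho_t\circ(\iota_{\mathfrak p})_*[\X_\eta,\omega_\eta]=[\X_{\mathfrak p},\omega_{\mathfrak p}].
\]
That is, only strata of the special fibre $\Delta$ are cut out. The horizontal boundary $\B_o$ is recorded in the polar divisor of the residue form $\omega_o$, rather than producing separate stratum terms. With that identification there is nothing to separate, and the rest of your argument goes through: freeness, reading a zigzag of form-preserving birational morphisms as crepant birational equivalence, and the harmless scalar $ut^r$.

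If you keep the formula with horizontal strata, you need a genuinely additional argument that the contributions of the coefficient-one strata agree on both sides. A crepant birational map does not obviously provide this, since it need not preserve those strata. The issue arises only along components of $\B$ with coefficient one; along components where the pole order is less than $m$, the residue vanishes.
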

	\begin{proof}
		Let $m\in \mathbb{Z}_{> 0}$ be the minimal integer such that $m\B$ is Cartier and $mK_\X \sim_S -m\B$. Then  there is a relative logarithmic volume $m$-pluriform $\omega$ on $\X$ (resp. $\omega'$ on $\X'$) such that $\text{div}(\omega) = -m\B$ (resp. $\text{div}(\omega') = -m\B'$). By the hypotheses, we obtain
		$[\X_\eta, \omega_\eta]= [\X'_\eta, \omega'_\eta] \in \Burn(K)$ where $K = \kappa(\eta)$. Fix a closed point $o\in S$, an affine open neighborhood $U = \text{Spec}\, A$,  a general regular prime divisor $  \text{Spec}\, A/\mathfrak{p}\subseteq U$ passing through $o$,  and we choose a uniformizer $t \in R:=  A_\mathfrak{p}$. Let $\widehat{K_\mathfrak{p}}$ be the completion of $K$ at $\mathfrak{p}$, that is, a local field with residue field $\widehat{\kappa(\mathfrak{p})}$ isomorphic to $\kappa(\mathfrak{p})((t))$. Let $\iota_s: K\hookrightarrow \widehat{K_\mathfrak{p}}$ be the canonical inclusion, and $(\iota_{\mathfrak{p}})_*: \Burn(K) \to \Burn(\widehat{K_\mathfrak{p}})$ the induced morphism (\cref{prop: ext_fields}). According to \cref{th: specialization} there is a specialization morphism $\rho_t: \Burn(\widehat{K_\mathfrak{p}}) \to \Burn(\widehat{\kappa(\mathfrak{p})})$. From the definition of $\rho_t$ it follows that:
		\begin{equation*}
			[\X_\mathfrak{p}, \omega_\mathfrak{p}] = \rho_t\circ(\iota_{\mathfrak{p}})_* [\X_\eta, \omega_\eta] = \rho_t\circ(\iota_{\mathfrak{p}})_*[\X'_\eta, \omega'_\eta] = [\X'_\mathfrak{p}, \omega'_\mathfrak{p}].
		\end{equation*}
		By induction on dimension of the regular base $S$, we obtain the equality $[\X_s, \omega_s] = [\X'_s, \omega'_s]$ in $\Burn(\widehat{\kappa(s)})$ where $\widehat{\kappa(s)} \approx \Ko((x))$ for some formal local coordinate $x$. This implies that $[\X_s, \omega_s] = [\X'_s, \omega'_s]$ in $\Burn(\Ko)$ by the standard descent argument. From the constructions of $\omega, \omega'$ it follows that $(\X_s, \B_s)\sim_\text{bir}(\X_s', \B'_s)$. Since the point $o\in S$ was chosen arbitrarily, this completes the proof. 
	\end{proof}
	
	Now, let us compare \cref{th: bir_spec} with the Matsusaka--Mumford specialization results \cite{MM64}. The former is merely an existence result for proper varieties while the latter gives a sufficient condition when the specialization of a birational map from the generic fiber is an isomorphism in the projective category (see \cite[Proposition 3.1.2]{Kol85}). For our applications, the second approach turned out to work better. We state and prove a simplified modern version \cite[Theorem 11.40]{Kol23}.
	
	\begin{definition}
		Let $(X, B)$ be a lc pair, and $W\subseteq X$ be an irreducible closed subset.  The \textit{minimal log discrepancy} of $W$ is
		\begin{equation*}
			\textnormal{mld}(W; X, B) = \inf\{1 + a(E; X, B)\}
		\end{equation*}
	where $E$ runs through all divisors over $X$ which dominate $W$. We say $W$
	 is \textit{a log center} of $(X, B)$ if $\textnormal{mld}(X, B; W) < 1$. Note that $\textnormal{mld}(W; X, B) = 1 - \text{coeff}_W B$ if $W$ is an irreducible divisor on $X$.
	\end{definition}
	
	\begin{proposition}
		\label{prop: iso_spec}
		Let $S$ be a smooth variety over $k$. Let $f: (X, B)\to S$ and $f': (X', B')\to S$ be projective morphisms from lc pairs such that $B, B'$ are effective, and $K_{X} + B$ and $K_{X'} + B'$ are ample over $S$. Suppose there is an open dense subset $U\subseteq S$ together with a log isomorphism:
		\begin{equation*}
			\varphi_0: (X, B)|_{U} \to (X', B')|_{U}. 
		\end{equation*}
		Then $\varphi_0$ extends to a log isomorphism $\varphi: (X, B)\to (X', B')$ if every log center of $(X, B)$ (resp. $(X', B')$) is not included in $ X\smallsetminus f^{-1}(U)$ (resp. $X'\smallsetminus (f')^{-1}(U)$).
	\end{proposition}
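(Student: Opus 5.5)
The plan is the Matsusaka--Mumford/Kollár argument via graph closures and the uniqueness of lc models. Since both pairs are lc with $K+B$ ample over $S$, each is the relative log canonical model of any crepant log resolution, so it suffices to show that $\varphi_0$ extends to a crepant birational map $(X,B)\dashrightarrow(X',B')$ over $S$, meaning that the $\R$-divisors obtained by pulling back $K_X+B$ and $K_{X'}+B'$ to a common model agree. Once this holds, both pairs are recovered as $\operatorname{Proj}_S\bigoplus_n f_*\Os(\lfloor n(K+B)\rfloor)$, computed on a common resolution, and we obtain an isomorphism extending $\varphi_0$ that carries $B$ to $B'$.

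\textbf{Step 1 (common resolution).} Let $W$ be a log resolution of the closure of the graph of $\varphi_0$, with maps $p\colon W\to X$ and $p'\colon W\to X'$ over $S$. Write
\begin{equation*}
p^*(K_X+B)=K_W+D, \qquad p'^*(K_{X'}+B')=K_W+D'.
\end{equation*}
Over $U$ the map $\varphi_0$ is a log isomorphism, so $D=D'$ over $f^{-1}(U)$. Hence $E:=D-D'$ is supported on divisors of $W$ lying over $S\smallsetminus U$, and $E\equiv -(p^*(K_X+B)-p'^*(K_{X'}+B'))$ over $S$.

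\textbf{Step 2 (negativity).} I claim $E=0$. Consider a prime divisor $F$ on $W$ mapping into $S\smallsetminus U$. If $F$ is a log center of neither pair, then its log discrepancies with respect to both pairs are at least $1$. I expect this to be the main obstacle: the hypothesis on log centers has to be converted into $\operatorname{coeff}_F D,\operatorname{coeff}_F D'\le 0$ for every such $F$, and one must also account for divisors $F$ that are nonexceptional over $X$ but exceptional over $X'$. A codimension-one log center of $(X,B)$ is a divisor with positive coefficient in $B$, so by hypothesis every component of $B$ meets $f^{-1}(U)$; the same holds for $B'$.

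Once this is settled, $p_*E$ and $p'_*(-E)$ are controlled, and the negativity lemma applied over $S$ to the $p$-nef and $p'$-nef differences gives $E\ge0$ and $E\le 0$. More concretely, $p^*(K_X+B)-p'^*(K_{X'}+B')$ is $p'$-nef up to sign, with $p'_*$ of it effective because $K_{X'}+B'$ is ample, and symmetrically. Together these force $E=0$. If the global negativity argument is awkward, I will first localize at the generic points of the components of $S\smallsetminus U$ (codimension one) and reduce to a DVR base, as in \cite[Theorem 11.40]{Kol23}, and then extend over codimension $\ge 2$ subsets of $S$ using the $S_2$/Hartogs property of the relative section rings.

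\textbf{Step 3 (conclusion).} With $D=D'$, the sections satisfy $H^0(X,n(K_X+B))=H^0(W,n(K_W+D))=H^0(X',n(K_{X'}+B'))$ relatively over $S$. Taking $\operatorname{Proj}$ yields an isomorphism $\varphi\colon X\to X'$ over $S$ extending $\varphi_0$, and we get $\varphi_*B=p'_*D'=B'$, as required.
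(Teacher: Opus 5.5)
Your proposal is correct in substance, and Steps 1--2 are the paper's argument. The paper works on the normalization $\Gamma$ of the graph closure rather than on a log resolution, writes $K_\Gamma+B_\Gamma=p^*(K_X+B)+E=p'^*(K_{X'}+B')+E'$, and applies the negativity lemma to $p$ and to $p'$ to get $E=E'$; this is your $D=D'$.

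A few details need fixing:
\begin{itemize}
\item \emph{Sign.} With your definitions, $D-D'=p^*(K_X+B)-p'^*(K_{X'}+B')$ exactly, with no minus sign. So $-E$ is $p$-nef and $E$ is $p'$-nef. That is what your Step 2 actually uses: $p_*E\ge 0\Rightarrow E\ge 0$ and $p'_*(-E)\ge 0\Rightarrow E\le 0$.
\item \emph{The ``main obstacle'' is immediate.} Let $F\subset W$ be a prime divisor over $S\smallsetminus U$. Its center $p(F)$ lies in $X\smallsetminus f^{-1}(U)$, so it is not a log center. Hence $1+a(F;X,B)\ge 1$, i.e. $\operatorname{coeff}_F D\le 0$, and likewise $\operatorname{coeff}_F D'\le 0$. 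If $F$ is not $p$-exceptional, effectivity of $B$ forces $\operatorname{coeff}_F D=0$, so $\operatorname{coeff}_F E=-\operatorname{coeff}_F D'\ge 0$. Since $E$ vanishes over $U$, this gives $p_*E\ge 0$, and symmetrically $p'_*(-E)\ge 0$. This also handles the divisors that are exceptional over only one side.
\item \emph{Roles of the hypotheses.} Effectivity of the pushforwards comes from the log-center hypothesis, not from ampleness as your ``more concretely'' sentence suggests. Ampleness (in fact nefness over $S$) only supplies the relative nefness. No localization at codimension-one points of $S$ is needed.
\end{itemize}

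The genuine difference is the final step. You conclude by uniqueness of the relative lc model, identifying the section algebras of $K_X+B$ and $K_{X'}+B'$ through $W$. The paper instead shows that the projections $p\colon\Gamma\to X$ and $p'\colon\Gamma\to X'$ are finite. A curve $C$ contracted by $p$ but not by $p'$ would give $0=p^*(K_X+B)\cdot C=(K_{X'}+B')\cdot p'_*C>0$. A curve contracted by both would be contracted by the finite map $\nu$. Zariski's main theorem then makes $p$ and $p'$ isomorphisms.

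Your route is standard and works verbatim when $B,B'$ are $\mathbb{Q}$-divisors: take $m$ with $m(K_X+B)$ and $m(K_{X'}+B')$ Cartier and use $p_*\mathcal{O}_W=\mathcal{O}_X$. That covers the paper's own application. However, the paper's conventions allow $\mathbb{R}$-boundaries. In that case $\bigoplus_n f_*\mathcal{O}_X(\lfloor n(K_X+B)\rfloor)$ is in general not finitely generated, so recovering $X$ as its $\operatorname{Proj}_S$ needs extra justification.

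The paper's ending avoids section rings and works for arbitrary coefficients. On your $W$ it reads as follows: $p$ and $p'$ contract exactly the curves $C$ in fibres over $S$ with $p^*(K_X+B)\cdot C=0$, so by the rigidity lemma each factors through the other.
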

	\begin{proof}
		Let $\nu: \Gamma\to X\times_S X'$ be a normalization of the closure for the graph $\varphi_0$ with projections $p: \Gamma\to X$ and $p': \Gamma\to X'$. Since the pairs $(X, B)$ and $(X', B')$ are lc with effective boundaries, we have $\text{mld}(W; X, B) < 1$ and $\text{mld}(W'; X', B')< 1$ for all $W\in \text{supp} \,B$ and $W'\in \text{supp}\, B'$. Recall that no log centers for $(X, B)$ and $(X', B')$ are included in $Z:= X\smallsetminus f^{-1}(U)$ and $Z':= X'\smallsetminus (f')^{-1}(U)$ respectively. Then we have $B_\Gamma:= (p^{-1})_* B = (p')^{-1}_*B'$, and write
		\begin{equation*}
			K_\Gamma + B_\Gamma = p^*(K_X + B) + E = (p')^*(K_{X'} + B') + E'
		\end{equation*}
		where $E$ is a $p$-exceptional effective $\R$-divisor such that $p_*(\text{supp} E)\subseteq Z$, and $E'$ is defined similarly. Hence,
		\begin{equation*}
			E - E' = (p')^*(K_{X'} + B') - p^*(K_X + B).
		\end{equation*}
		Therefore, $E-E'$ is $p$-nef and $-p_*(E- E')$ is effective. From the negativity lemma \cite[Theorem 11.60]{Kol23} it follows that $-(E-E')$ is effective. Similarly, $E'-E$ is effective, so $E = E'$. It implies that $p^*(K_X + B) = (p')^*(K_{X'} + B')$. We claim that the projections $p, p'$ from $\Gamma$ are finite. Indeed, if a curve $C\subseteq \Gamma$ is contracted by both $p, p'$ then it is contracted by the normalization morphism $\nu: \Gamma\to X\times_S X'$ due to the universal property of the fiber product. This is not possible because $\nu$ is finite. Hence, if there is a curve $C\subseteq \Gamma$ contracted by $p$, it can not be contracted by $p'$. Then we obtain a contradiction: $$0 = p^*(K_X + B)\cdot C = (p')^*(K_{X'} + B')\cdot C = (K_{X'} + B')\cdot (p')_* C > 0$$ because $K_{X'} + B'$ is ample over $S$. By the Zariski main theorem, the morphisms $p, p'$ are isomorphisms. Taking into account that $p^*(K_X + B) = (p')^*(K_{X'} + B')$, we conclude the proof.
	\end{proof}

	\section{Main results}
		\subsection{Smooth case}
		We refer the reader to \cite{CMP17} for the basics of Hodge theory and to \cite{Cat84} for an overview of various Torelli type problems. Let $S$ be a variety over $\Ko$. We write $S^\an$ (resp. $\Os^\an$) for the analytification of $S$ (resp. $\Os_S$).
		\begin{definition}
			\label{def: transcendental}
			Let $S$ be a regular variety. Let $\V = (\V_\Zi, F^\bullet\V_{\Os^\an})$ be a polarizable integral variation of pure Hodge structures ($\Zi$-VHS) on $S^\an$, and $F^n \V_{\Os^\an}$ be the deepest non-zero part of the Hodge filtration. Put $\V_\Q = \V_\Zi\otimes \Q$. By $(\V^\text{tr}_\Q, F^\bullet\V_{\Os^\an}^\text{tr})$ we denote the \textit{transcendental part of $\V$}, that is, a (unique) smallest polarizable rational Hodge substructure of $(\V_\Q, F^\bullet\V_{\Os^\an})$ for which $F^n \V_{\Os^\an}^\text{tr} = F^n \V_{\Os^\an}$. Note that $(\V^\text{tr}_\Q, F^\bullet\V_{\Os^\an}^\text{tr})$ is simple. We say $\V^\text{tr} = (\V^\text{tr}_\Zi, F^\bullet\V_{\Os^\an}^\text{tr})$ is the \textit{transcendental Hodge lattice for} $\V$, where $\V^\text{tr}_\Zi = \V^\text{tr}_\Q\cap \V_\Zi$.
		\end{definition}
		\begin{construction}
			\label{constr: period_maps_smooth}
			Let $f:\X\to S$ be a smooth projective morphism between regular varieties. Suppose $\X_s$ is $K$-torsion for all $s\in S$. We shall assume that there exists\footnote{We will see in the next subsection (\cref{lem: BB}) that such cover always exists \'etale locally.} an \'etale Galois cover $\pi: \X'\to \X$ such that $\Omega^d_{\X'_s} \approx \Os_{\X'_s}$ for all $s\in S$. Put $f' = f\circ \pi$.
			
			Let $s_0\in S$ be a closed point, and $d = \dim \X'/S$. Suppose $\A$ is an ample invertible sheaf on $\X'/S$. For every closed $s\in S$ there is a polarization $(\mathcal{Q}^\prim)_s$ on  $ (\V_\Zi^\prim)_s = H^d_\text{prim}(\X'_s, \mathbb{Q})\cap H^d(\X'_s, \mathbb{Z}) $ associated with the cup product on $H^d(\X'_s, \Zi)$ and the Lefschetz decomposition for $H^d(\X'_s, \Q)$ relative to~$\A_s$. By \cite[Corollary 2.3.5]{CMP17},  $(\mathcal{Q}^\prim)_s$ can be extended to all of $(\V_\Zi^\full)_s = H^d(\X'_s, \Q)\cap H^d(\X'_s, \Zi)$. By $(\Qp^\full)_s$ we denote the extended polarization, and $(\Qp^\tr)_s$ its restriction to $(\V^\text{tr}_\Zi)_s$.
			
			In the standard way \cite{Gri70}, we define $\V^\full =  (\V_\Zi^\full, F^\bullet \V_{\Os^\an}^\full, \Qp^\full)$ the polarized $\Zi$-VHS weight $d$ associated with $\V_{\Os^\an} = (R^df_*' \underline{\Ko})\otimes \mathcal{O}_{S^\an}$. We write $\Gamma^\full = \text{Aut}\left((\V_\Zi^\full)_{s_0}, \mathcal{Q}_{s_0}^\full\right)$ for the arithmetic monodromy group that acts naturally on the period domain $\D^\full$. By $\Phi^\full: S^\an\to \Gamma^\full\backslash\D^\full$ we denote the corresponding period map, which is a holomorphic map between analytic spaces \cite[Lemma-Definition 4.6.3]{CMP17}. The period maps $\Phi^\prim: S^\an \to \Gamma^\prim \backslash \D^\prim$ and $\Phi^\text{tr}: S^\an \to \Gamma^\tr\backslash \D^\tr$ are defined similarly.
		\end{construction} 
			  The following theorem asserts that the variations of Hodge structures  $\V^\tr\hookrightarrow \V^\prim \hookrightarrow \V^\full$  capture isomorphism types of fibers in a given family of K-torsion varieties. Furthermore,  the theorem provides a sufficient condition when two fibers of a given family are isomorphic.
		\begin{theorem}
			\label{th: main_smooth}
			Keep the notation and assumptions of \cref{constr: period_maps_smooth}. Let $P^\tr$ (resp. $P^\prim, P^\full$) be a connected component of $(\Phi^\tr)^{-1}(\star)$ (resp. $(\Phi^\prim)^{-1}(\star), (\Phi^\full)^{-1}(\star)$) for some point $\star$ in the corresponding period domain. Then $\X_s\approx \X_t$ for all (closed) points $s,t$ in $P^\tr$ (resp. in $P^\prim, P^\full$).
		\end{theorem}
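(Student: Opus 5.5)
The plan is to reduce everything to the transcendental case. Then show that along a connected fiber of $\Phi^\tr$ the Kodaira--Spencer map of the family vanishes identically, using the infinitesimal Torelli theorem for the trivial canonical bundle of $\X'_s$. Finally, deduce isotriviality of $\X$ along that fiber.

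\emph{Reduction.} Since $\V^\tr\hookrightarrow\V^\prim\hookrightarrow\V^\full$ are inclusions of sub-VHS, a point of $\D^\full$ (resp. $\D^\prim$) determines the induced point of $\D^\tr$ (resp. $\D^\tr$). Hence every fiber of $\Phi^\full$ or $\Phi^\prim$ lies in a fiber of $\Phi^\tr$, and each connected component $P^\full$ or $P^\prim$ lies in a single connected component $P^\tr$. So it suffices to treat $P^\tr$.

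By algebraicity of period maps \cite{BBT23}, $P:=P^\tr$ is a connected closed algebraic subset of $S$. Any two closed points of $P$ are joined by a chain of irreducible curves in $P$. Replacing each curve by its normalization $T\to P$ and pulling back the family, we are reduced to the following statement. \emph{If $T$ is a smooth connected curve and the transcendental period map of $\X_T/T$ is constant, then all fibers $\X_t$ are isomorphic.}

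\emph{Vanishing of Kodaira--Spencer.} Let $\sigma_t$ be a nowhere vanishing generator of $H^0(\Omega^d_{\X'_t})=F^d\V_t$. Contraction with $\sigma_t$ gives an isomorphism $T_{\X'_t}\cong\Omega^{d-1}_{\X'_t}$. Therefore the differential of the period map restricted to the deepest piece,
\[
H^1(\X'_t,T_{\X'_t})\to \operatorname{Hom}\bigl(H^{d,0},H^{d-1,1}\bigr),\qquad \xi\mapsto \xi\lrcorner\sigma_t,
\]
is injective. This is the infinitesimal Torelli theorem for this piece. Since $\V^\tr_\Q$ is a sub-local system containing $F^d$, the Gauss--Manin derivative of $\sigma_t$ stays in $\V^\tr\otimes\Os^\an$. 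Hence constancy of $\Phi^\tr$ forces $\nabla\sigma_t\in F^d$, i.e. $\kappa(\partial_t)\lrcorner\sigma_t=0$ in $H^{d-1,1}$. Injectivity then gives that the Kodaira--Spencer map of $\X'_T/T$ vanishes.

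Since $\pi$ is étale Galois with group $G$ acting fiberwise, $H^1(\X_t,T_{\X_t})=H^1(\X'_t,T_{\X'_t})^G$, and the Kodaira--Spencer class of $\X_T$ maps to that of $\X'_T$. So it vanishes for $\X_T/T$ as well.

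\emph{From vanishing Kodaira--Spencer to isotriviality.} This is the step I expect to be the main obstacle, since vanishing of Kodaira--Spencer alone does not give local triviality without extra input. I would use the polarization. Locally on $T$, embed $\X_T$ by a relatively very ample multiple of the pushed-down polarization, which yields a map $h:T\to \mathrm{Hilb}$ into the Hilbert scheme. Vanishing of Kodaira--Spencer means that $dh$ lands in the tangent spaces of the $PGL$-orbits. The orbits are locally closed and smooth, so the irreducible curve $h(T)$ lies in a single orbit. Consequently all fibers over a neighborhood are isomorphic, and hence over all of $T$ by connectedness.

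Chaining these curves across $P$ then yields $\X_s\approx\X_t$ for all closed $s,t\in P^\tr$, which proves the claim for $P^\tr$, $P^\prim$ and $P^\full$.
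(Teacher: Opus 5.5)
Your reduction to a regular base inside $P$ and your vanishing of the Kodaira--Spencer maps are sound and essentially follow the paper. The paper resolves the irreducible components of $P$ where you use chains of curves. The Kodaira--Spencer vanishing uses injectivity of $\xi\mapsto\xi\lrcorner\sigma_t$, the fact that $F^d$ lies in $\V^\tr$ whose Hodge filtration is flat along $P$, and descent to $\X$ through $G$-invariants. One small point: since $\Gamma^\full$ need not preserve $(\V^\tr_\Zi)_{s_0}$, the inclusion $P^\full\subseteq P^\tr$ should be argued locally, from local constancy of the lifted period map on a connected fiber.

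The genuine gap is in the last step, where you expected it. Pointwise vanishing of $\kappa_t$ does not imply isotriviality. Take the standard family of Hirzebruch surfaces over $\mathbb{A}^1$ with fibers $\mathbb{F}_0$ for $t\neq 0$ and $\mathbb{F}_2$ at $t=0$, and pull it back by $s\mapsto s^2$. Then $\kappa_s=0$ for $s\neq0$ because $H^1(T_{\mathbb{F}_0})=0$, and $\kappa_0=0$ because the base change has zero differential at $0$; yet $\mathbb{F}_0\not\approx\mathbb{F}_2$. In your Hilbert-scheme picture this curve is tangent to $PGL_{N+1}$-orbits at every point, but still leaves its orbit for a smaller one: the stabilizer jumps from dimension $6$ to $7$. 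So the inference ``orbits are locally closed and smooth, hence $h(T)$ lies in one orbit'' fails unless you know the orbit dimension is constant along $h(T)$. Your other claim, that $\kappa_t=0$ means $dh(\partial_t)$ is tangent to the orbit, also needs proof. For a linearly normal embedding with $H^1(\X_t,L_t)=0$, the Euler and normal sequences show that $\ker\bigl(H^0(N_{\X_t/\Pp^N})\to H^1(T_{\X_t})\bigr)$ contains the orbit's tangent space with quotient $\operatorname{coker}\bigl(H^0(T_{\X_t})\to H^1(\Os_{\X_t})\bigr)$. The map here is contraction with $c_1(L_t)$, and the extra directions move $L_t$ in $\operatorname{Pic}^0$.

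Both points can be repaired, but only by using something specific to K-torsion fibers, which your last step never does. $\operatorname{Aut}^0(\X_t)$ is an abelian variety mapping isogenously onto $\operatorname{Pic}^0(\X_t)$. Hence the contraction map above is surjective and the stabilizers $\operatorname{Aut}(\X_t,L_t)$ are finite. Then every orbit meeting $h(T)$ has dimension $\dim PGL_{N+1}$, and generic smoothness of the action map $PGL_{N+1}\times T\to\mathrm{Hilb}$ puts $h(T)$ in a single orbit.

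The paper supplies the missing input differently: it shows that $h^1(\X_s,T_{\X_s})$ is constant. Indeed $h^1(\X'_s,T_{\X'_s})=h^{d-1,1}(\X'_s)$ is constant, and the splitting $\pi_*T_{\X'/S}\cong\mathcal{K}\oplus T_{\X/S}$ together with semicontinuity passes this to $\X$. Then $R^1f_*T_{\X/S}$ is locally free and commutes with base change. So pointwise vanishing of $\kappa_s$ becomes vanishing of the global map $T_S\to R^1f_*T_{\X/S}$, vector fields on $S$ lift, and the family is locally trivial. In the Hirzebruch example it is exactly the jump of $h^1(T)$ from $0$ to $1$ that breaks this.
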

		\begin{proof}
			Fix $P\in \{P^\tr, P^\prim, P^\full\}$. By \cite[Theorem 1.1]{BBT23}, $P$ is an algebraic closed subset of $S^\an$. Let $P_1, P_2 \subseteq S^\an$ be intersecting irreducible components. If $\X_{s_1}\approx \X_{t_1}$ for all $s_1,t_1\in P_1$ and $\X_{s_2}\approx \X_{t_2}$ for all $s_2, t_2\in P_2$ then $\X_s\approx \X_{r} \approx \X_{t}$ for all $s\in P_1, r\in P_1\cap P_2, t\in P_2$. Hence we shall assume that $P$ is irreducible. Let $\nu: \widetilde{P}\to P$ be (a normalization composed with) a resolution of singularities, and $\widetilde{\X} = \X\times_P \widetilde{P}$. Then $\widetilde{\X}_s \cong \X_{\nu(s)}$ for all (closed) points $s\in \widetilde{P}$ as the base field $\Ko$ is algebraically closed. It is clear that the family $\widetilde{\X}/\widetilde{P}$ satisfies the assumptions of \cref{constr: period_maps_smooth}. Hence we can assume that $P$ is a regular irreducible algebraic closed subset of $S$. To ease the notation, we denote the new families $\X_P/P, \X'_P/P$ by $\X/S, \X'/S$, i.e $P = S$.
			
			Now, we prove that $\dim H^1(\X'_s, T_{\X'_s})$ is independent of $s\in S$, and $\Ker(\kappa'_s) = T_{S, s}$ for all $s\in S$, where $\kappa'_s: T_{S, s}\to H^1(\X'_s, T_{\X'_s})$ is the Kodaira--Spencer map for $\X'/S$. Let $\lambda^\full_s: H^1(\X'_s, T_{\X'_s})\to \text{Hom}\left(H^{d,0}(\X'_s), H^{d-1, 1}(\X'_s)\right)$ be the natural map given by the composition of the cup product $H^1(\X'_s, T_{\X'_s})\otimes H^0(\X'_s, \Omega^d_{\X'_s})\to H^1(\X'_s, T_{\X_s}\otimes \Omega^d_{\X'_s})$ with the contraction $H^1(\X'_s, T_{\X_s}\otimes \Omega^d_{\X'_s})\to H^1(\X'_s, \Omega^{d-1}_{\X'_s})$. Let $$d_s\Phi^\full: T_{S, s}\to  \bigoplus_{p+q = d}\text{Hom}\left(H^{p,q}(\X'_s), H^{p-1, q+1}(\X'_s)\right)$$ be the differential of the period map $\Phi^\full$ at a closed point $s\in S$. Set $$\sigma^\full_s\colon T_{S, s}\to \mathrm{Hom}\left(H^{d,0}(\X'_s), H^{d-1, 1}(\X'_s)\right)$$ to be its bottom graded piece. Clearly, $\Ker (d_s\Phi^\full) \subseteq \Ker(\sigma_s^\full$). Then $\sigma_s^\full = \lambda_s^\full \circ \kappa'_s$ (cf. \cite[Theorem 5.3.4]{CMP17}). The maps $\lambda_s^\tr, \lambda_s^\prim$ and $\sigma_s^\tr, \sigma_s^\prim$ are defined similarly, and we have  $\sigma_s^\tr = \lambda_s^\tr \circ \kappa'_s$ and  $\sigma_s^\prim = \lambda_s^\prim \circ \kappa'_s$ for all $s\in S$. Since $S=P\in \{P^\tr, P^\prim, P^\full\}$, we have $T_{S,s}\in \{\Ker(\sigma_s^\tr), \text{Ker}(\sigma_s^\prim), \Ker(\sigma_s^\full)\}$. Note that $ (\V_\Ko^\tr)^{d,0}_s = H^{d,0}_\prim(\X'_s) = H^{d,0}(\X'_s)$ and $(\V_\Ko^\tr)^{d-1,1}_s\subseteq H^{d-1, 1}_\prim(\X'_s)\subseteq H^{d-1, 1}(\X'_s)$ by \cref{constr: period_maps_smooth}. Then
			we obtain the following commutative diagram
			$$
			\xymatrix{
				& &\text{Hom}\left(H^{d,0}(\X'_s), H^{d-1, 1}(\X'_s)\right)\\
				T_{S,s} \ar@{->}[r]^(0.35){\kappa'_s} \ar@/^2.5pc/[urr]^(0.3){\sigma^\full_s} \ar@/_2.5pc/[drr]_(0.3){\sigma^\tr_s} & H^1(\X'_s, T_{\X_s'}) \ar@{->}[r]^(0.4){\lambda^\prim_s} \ar@{->}[ur]^(0.45){\lambda^\full_s} \ar@{->}[dr]_(0.45){\lambda^\tr_s}& \text{Hom}\left(H^{d,0}(\X'_s), H^{d-1, 1}_\prim(\X'_s)\right) \ar@{}[u]|-*[@]{\subseteq}\\
				&&\text{Hom}\left(H^{d,0}(\X'_s), (\V_\Ko^\tr)^{d-1, 1}(\X'_s)\right). \ar@{}[u]|-*[@]{\subseteq}
			}
			$$
			From $\Omega_{\X'_s}^d\approx \Os_{\X'_s}$ it follows that $\lambda_s^\full$ is an isomorphism (infinitesimal Torelli theorem). Therefore, $\Ker(\kappa'_s) = \Ker(\sigma_s^\full)$. Moreover, the commutativity of  above diagram implies that $\Ker(\sigma^\tr_s) = \Ker(\sigma^\prim_s) = \Ker(\sigma^\full_s)$. Hence, $\Ker(\kappa'_s) = T_{S, s}$ for all closed points $s\in S$. Moreover, the dimension $\dim H^1(\X'_s, T_{\X'_s}) = h^{d-1, 1}(\X'_s)$ is independent of a point $s\in S$.
			
			Finally, we prove that $\dim H^1(\X_s, T_{\X_s})$ is independent of $s\in S$, and $\Ker(\kappa_s) = T_{S, s}$ for all $s\in S$, where $\kappa_s: T_{S, s} \to H^1(\X_s, T_{\X_s})$ is the restriction of the Kodaira--Spencer map for $\X/S$. Recall that $\pi: \X'\to \X$ is an \'etale Galois cover, and $\pi_s: \X_s'\to \X_s$ its induced cover. Let $G$ denote the deck group for $\X'/\X$ over $S$. Then $T_{\X_s}\cong \left((\pi_s)_* T_{\X'_s}\right)^{G}$ for all points $s\in S$. This makes commutative the following diagram
			$$
				\xymatrix{
					T_{S, s} \ar@{->}[r]^(0.35){\kappa'_s}  \ar@{->}[dr]_(0.5){\kappa_s} & H^1(\X'_s, T_{\X'_s})\\
					& H^1(\X_s, T_{\X_s}) \ar@{^{(}->}[u]^(0.35){}.
				}
			$$
			
			Therefore, $\Ker(\kappa_s) = \Ker(\kappa'_s) = T_{S, s}$ for all closed points $s\in S$. Consider the natural surjective map $\pi_*T_{\X'/S}\to T_{\X/S}$ locally given by $v\mapsto\frac{1}{|G|}\sum_{g\in G}gv$, and denote its kernel by $\mathcal{K}$. Then $\pi_*T_{\X'/S} \approx \mathcal{K}\oplus T_{\X/S}$. Hence the semicontinuity theorem \cite[Theorem 12.8]{Har77} together with constancy of $\dim H^1(\X_s', T_{\X'_s})$ implies that $\dim H^1(\X_s, T_{\X_s})$ is independent of $s\in S$. Hence, $R^1f_*T_{\X/S}$ is locally free, and the natural map $R^1f_*T_{\X/S}\otimes \kappa(s)\to H^1(\X_s, T_{\X_s})$ is an isomorphism for all $s\in S$ by \cite[Corollary 12.9]{Har77}, where $\kappa(s)$ is the residue field at $s\in S$. Thus, the (global) Kodaira--Spencer map $T_S\to R^1f_*T_{\X/S}$ is the zero map. From the deformation theory it follows that $\X/S$ is isotrivial, as required.
		\end{proof}

		\begin{remark}
			One can consider a variation $\V$ of integral polarized Hodge structures coming directly from $\X/S$. Then the analogue of \cref{th: main_smooth} does not hold in general (see \cref{ex: Enriques}) for the period map associated with $\V$ unless all fibers $\X_s$ are K-trivial, i.e $\Omega_{\X_s}^d\approx \Os_{\X_s}$.
		\end{remark}
		\begin{example}\label{ex: Enriques}
			Let $X$ be a smooth projective Enriques surface, that is, $\omega_X^{\otimes 2} \approx \Os_X$ and $H^1(X, \Os_X) = H^2(X, \Os_X) = 0$. Then $H^2(X, \Ko) = H^{1,1}(X)$. This implies that the Kuranishi family $\X/S$ for $X$ has constant period maps. To construct a similar example in the projective category, one can use the fact that all projective Enriques surfaces admit a bounded polarization \cite[Theorem 6.3]{FHS25}.
		\end{example}
		
		\begin{remark}
			Let us outline a different approach for \cref{th: main_smooth} via the specialization argument. For simplicity we assume that $P^\full = S$. Then there is a K-torsion variety $X$, and an open dense subset $U\subseteq S$ such that $\X_s\approx X$ for all closed points $s\in U$ by \cite[Theorem 2.2]{Amb05}. Since the Kodaira dimension $\kappa(\X_s)$ is zero for all points $s\in S$, we have $\X_s$ is not uniruled. In particular, $\X_s$ is not ruled. Now fix a closed point $o\in S$, a general  regular curve $C\subseteq S$ passing through $o$, and consider a divisorial valuation ring $R = \Os_{C, o}$ with its generic point $\eta$. After an \'etale base change, we can assume that $\X_\eta \approx X\times_\Ko \kappa(\eta)$. By \cite[Theorem 2]{MM64} the family $\X_\eta$ has at most one smooth extension over $\text{Spec}\, R$ where the central fiber $\X_o$ is not ruled. This implies that $\X_o \approx X$.
		\end{remark}

		\subsection{General case}\label{sec: general_case}
		Now, we generalize ideas provided in the previous section. Since projective families of varieties with singularities  \textit{apriori} carry only natural mixed Hodge structures, we introduce several technical but useful constructions to obtain a variation of pure Hodge structures.
		
		\begin{definition}
			Suppose $(X, B)$ is a pair. Then the $\R$-divisor $B$ can be uniquely written as a difference of two effective $\R$-divisors $B_{\ge 0}$ and $B_{\le 0}$ without common components. We say that $N = \lceil B_{\le 0} \rceil$ is the \textit{negative part} of $B$, and $F = B - \lfloor B \rfloor$ is the \textit{fractional part} of $B$. In general, $F$ and $N$ may have common components. Then we can write
			$
			B = \lfloor B_{\ge 0} \rfloor + F - N.
			$
		\end{definition}
		
		\begin{construction}
			\label{const: covering_trick}
			Let $(X, B_X)$ be a klt log pair such that $B_X$ is a $\Q$-divisor and $K_X + B_X \sim_\Q 0$. Consider a log resolution $g: (Y, B_Y) \to (X ,B_X)$, where $K_Y + B_Y = g^*(K_X + B_X)$. Write $B_Y = F_Y-N_Y$, where $N_Y$ (resp. $F_Y$) is the negative (resp. fractional) part of~$B_Y$. Suppose $m\in \Zi_{>0}$ is the minimal integer such that $mF$ is integral, and $m(K_Y + B_Y)\sim~0$. Set $L = \mathcal{O}_Y(N_Y-K_Y)$. A choice of the section $s\in H^0(Y, L^{\otimes m})$ such that $mF_Y = (s = 0)$ determines a normalized cyclic cover $\pi^1:~(Z, B_Z) \to (Y, B_Y)$ of degree $m$ ramified over $F$. As always, $K_Z + B_Z = (\pi^1)^*(K_Y + B_Y)$. We shall say that $\pi: (Z, B_Z) \to (Y, B_Y)$ is an \textit{index-1 cover}. Moreover, $(\pi^1)_* \omega_Z \cong \bigoplus_{i=0}^{m-1}\omega_Y\otimes L^{\otimes i}(-\lfloor iF_Y \rfloor)$ (cf. \cite[Section 8.10]{Kol07}). In particular, $H^0(Y, \mathcal{O}_Y(N_Y))$ is naturally a direct summand of $H^0(Z, \omega_Z)$. The latter is the deepest non-zero piece of the Hodge filtration on $H^{\dim Z}(Z, \Ko)$.
		\end{construction}
		\begin{remark}
			\label{remark: mixed_are_pure}
			Since $(Y, B_Y)$ is log smooth, $Z$ has (at worst) quotient singularities (see \cite[Section 8.10.4]{Kol07}). Then the canonical mixed Hodge structure on $H^n(Z, \Q)$ is pure by \cite{Ste76}. 
		\end{remark}
		
		\begin{definition}
			\label{def: CY-VHS}
			Let $S$ be a regular variety. Let $\V = (\V_\Zi, F^\bullet\V_{\Os^\an})$ be a polarizable $\Zi$-VHS on $S^\an$. We say $\V$ is a \textit{Calabi--Yau variation} if the deepest non-zero part $F^n\V_{\Os^\an}$ of the Hodge filtration has rank one. In this case, we call $\V$ a polarizable CY $\Zi$-VHS, and $\Ho_S = F^n \V_{\Os^\an}\cong \gr^n_F\V_{\Os^\an}$ the Hodge line bundle.
		\end{definition}
		\begin{definition}
			\label{def: lc-trivial}
			Let $(\X, \B)$ be a log pair, $\B$ is a $\Q$-divisor, and $S$ a normal variety. A proper contraction $f: \X \to S$ is called \textit{lc-trivial} if
			\begin{enumerate}
				\item 
				$(\X_\eta, \B_\eta)$ is a lc log pair, where $\eta$ is the generic point of $S$.
				\item 
				$K_\X + \B \sim_{\Q, S} 0$, i.e. $K_\X + \B \sim_\Q f^*L$ for some $\Q$-Cartier $\Q$-divisor $L$ on $S$.
				\item 
				$\textnormal{rk}\, f_*\Os\left(\lceil \mathbf{A}^*(\X, \B)\rceil\right) = 1$,
			\end{enumerate}
			where $\mathbf{A}^*(\X, \B)$ is the b-divisor defined by taking its trace on any birational model $g: \Y\to \X$, that is, $\mathbf{A}^*(\X, \B)_{\Y} := K_{\Y} - g^*(K_\X + \B) + \sum_{a(E; \X, \B) = 1} E.$
		\end{definition}

		\begin{construction}
			\label{const: main}
			Let $f_\X: (\X, \B_\X) \to S$ be a projective lc-trivial fibration of relative dimension $d\in \Zi_{>0}$. We shall assume that $(\X/S, \B)$ is a projective elementary klt family. Consider a log resolution $g: (\Y, \B_\Y) \to (\X, \B_\X)$, and $E_\Y$ be the reduced exceptional divisor. We assume that $(\Y/S, \B_\Y)$ (resp. $(\Y/S, E_\Y)$) is a projective elementary klt families (resp. an elementary family). Similarly, we assume both $(\Y, \B_\Y), (\Y, E_\Y)$ are log smooth over $S$. As in \cref{const: covering_trick}, we write $\B_\Y = F_\Y - N_\Y$, and consider $\pi^1: (\Z, \B_\Z)\to (\Y, \B_\Y)$ an index-1 cover of degree $m\in \Zi_{>0}$. Choose a $\mu_m$-equivariant resolution of singularities $\nu: \widetilde{\Z} \to \Z$:
			$$
			\xymatrix{
				(\Y, \B_\Y)\ar@{->}[d]_{g} \ar@/_2.5pc/[dd]_{f_\Y} &(\Z, \B_\Z)\ar@{->}[l]_{\pi^1} \ar@{->}[ddl]^(0.4){f_\Z}&(\widetilde{\Z}, \B_{\widetilde{\Z}})\ar@{->}[l]_(0.5){\nu}  \ar@{->}[ddll]^(0.4){f_{\widetilde{\Z}}}\\
				(\X, \B_\X) \ar@{->}[d]_{f_\X} & &\\
				S & &.
			}
			$$
			
			We shall assume that the induced morphism $f_{\widetilde{\Z}}$ is smooth. The sheaf $(\pi\circ\nu)_*\omega_{\widetilde{\Z}}$ is $\mu_m$-equivariant, and it admits a decomposition into $\mu_m$-isotypic components:
			\begin{equation*}
				(\pi\circ\nu)_*\omega_{\widetilde{\Z}} = \pi_* \omega_{\Z} \cong \bigoplus_{i=0}^{m-1} \pi_*(\omega_\Z)_{\chi^i} \cong \bigoplus_{i=0}^{m-1}\omega_{\Y}\otimes \Li^{\otimes i}\left(-\lfloor i F_\Y\rfloor\right),
			\end{equation*}
			where $\chi$ is a generator of the character group $\hat{\mu}_m = \text{Hom}(\mu_m, \Ko^\times)$. In particular, we get the direct summand for $i=1$:
			\begin{equation*}
				\Os_{\Y}(N_\Y) \cong (\pi\circ \nu)_*\left(\omega_{\widetilde{\Z}}\right)_\chi \hookrightarrow (\pi\circ \nu)_* \omega_{\widetilde{\Z}}.
			\end{equation*}
			Let $\V_\Zi = \left(R^d(f_{\widetilde{\Z}})_*\underline{\Zi}\right)_\textnormal{tf}$, and $\V_\Ko = \V_\Zi\otimes \Ko$. By $\V_{\chi^i}$ we denote the $\chi^i$-isotypic component of $(\V_\Ko, F^\bullet \V_{\Os^\an})$, where $i = 1, 2, \dots, m$. Then after shifting the Hodge filtration $$\V_\chi(-2, 2)\oplus \left(\bigoplus_{i\neq \pm 1 \text{ mod } m} \V_{\chi^i}\right)\oplus \V_{\chi^{-1}}(2, -2)$$ we a obtain polarizable CY $\Zi$-VHS $\V$ of Hodge-level $d+4$ whose underlying local system is $\V_\Zi$.
			By construction, the Hodge line bundle for $\V$ is $\Ho = (f_\Y)_*\Os_\Y(N_\Y)$.
		\end{construction}

		\begin{definition}
			\label{def: CY_period_map}
			Let $S$ be a regular variety, and $s_0\in S$ be a closed point. Let $\V = (\V_\Zi, F^\bullet\V_{\Os^\an})$ be a polarizable CY $\Zi$-VHS on $S^\an$. Choose a polarization $\Qp$ on $\V^\text{tr}$. Put $V_\Zi = (\V_\Zi^\text{tr})_{s_0}$, $Q = \Qp_{s_0}$. Let $\Gamma^\CY = \text{Aut}(V_\Zi, Q)$ be the \textit{arithmetic monodromy group}, that is, a subgroup of $\text{GL}(V_\Zi)$ preserving the polarization $Q$. By $\Phi^\text{CY}: S^\an\to \Gamma^\CY\backslash \D^\text{CY}$ we denote the period map associated with $\V^\text{tr}, \Qp$, and $s_0$. By \cite[Lemma-Definition 4.6.3]{CMP17}, $\Phi^\text{CY}$ is a holomorphic map between analytic spaces. We shall say that $\Phi^\text{CY}$ is a \textit{CY period map} associated with the polarizable CY $\Zi$-VHS $\V$.
		\end{definition}
		\begin{remark}
			It is readily seen that CY period maps are independent of the choice $s_0\in S$ up to isomorphism. However, CY period maps depend on the choice of a polarization $\Qp$ on $\V^\text{tr}$ apriori.
		\end{remark}

		Recall that for every projective klt log pair $(X, B)$, the underlying variety $X$ always admits \cite[Section 2.2]{MW25} an \textit{MRC fibration}, that is, a dominant rational map $\psi: X\dashrightarrow Y$ to a normal variety $Y$ such that a very general rational curve in $X$ is included in the fiber $\psi^{-1}(y)$ for some $y\in Y$, a general fiber is rationally connected, and the indeterminacy locus is not dominant over $Y$.
		
		The following theorem is a singular version of the Beauville--Bogomolov decomposition \cite{Bog74, Bea83}. We refer an interested reader to \cite{BGL22, MW25} and references therein for history of the question. Recall that a \text{cover} is a finite surjective morphism. A cover $\pi: X'\to X$ is \textit{quasi-\'etale} if there is an open dense subset $U\subseteq X$ such that $\pi$ is \'etale over $U$ and $\text{codim } X\smallsetminus U\ge 2$. 
		\begin{theorem}
			\label{th: BB-klt}
			Let $(X, B)$ be a projective klt $0$-pair with $\Q$ coefficients. Then there exists a quasi-\'etale Galois cover $\pi: X'\to X$ such that $$(X', B') = (R, B_{R})\times_\Ko Ab\times_\Ko \prod_i CY_i \times_\Ko \prod_j S_j$$
			where $B' = \pi^* B$, and
			\begin{itemize}
				\item 
					$Ab$ is an abelian variety.
				\item 
					\begin{itemize}
						\item 
							 $R$ is a fiber of the MRC fibration for $X'$,
						\item 
							$(R, B_R)$ is a projective klt $0$-pair where $B_R = B'|_R$.
					\end{itemize}
				\item 
					 $CY_i$ is an irreducible Calabi-Yau variety:
					 \begin{itemize}
					 	\item 
					 		$CY_i$ has (at worst) canonical singularities,
					 	\item 
					 		$K_{CY_i}\sim 0$,
					 	\item 
					 		for all quasi-\'etale covers $CY_i'\to CY_i$ we have $$H^0(CY_i', \Omega^{[n]}_{CY'_i}) = 0$$ for all $0<n<\dim CY_i'$.
					 \end{itemize}
				\item 
					$S_i$ is a strict irreducible symplectic variety:
					\begin{itemize}
						\item 
							$S_i$ has (at worst) canonical singularities,
						\item 
							$K_{S_i}\sim 0$,
						\item 
							for all quasi-\'etale covers $S'_i\to S_i$ we have $$H^0(X, \Omega_{S'_i}^{[2]}) = \Ko \sigma \text{ and } H^0(X, \Omega_{S'_i}^{[1]}) = 0$$ for some $2$-form $\sigma$ non-degenerate on $(S_i')^{\textnormal{reg}}$,
						\item 
							the \'etale fundamental group ${\pi}_1^\text{\'et}(S_i^\textnormal{reg})$ of its regular locus is trivial.
					\end{itemize}
			\end{itemize}
		\end{theorem}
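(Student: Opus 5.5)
This is a compilation of known results, so I would prove it by reducing to the singular Beauville--Bogomolov decomposition in the literature, namely \cite{BGL22} together with the klt-pair refinement of \cite{MW25}. The strategy is to pass to the klt $0$-pair setting and then peel off, in order, the rationally connected part, the abelian part and the simply connected factors.

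First, since $B$ has rational coefficients and $K_X+B\equiv 0$, abundance for numerically trivial klt pairs gives $K_X+B\sim_\Q 0$. I would then split off the MRC part. Take the MRC fibration $\psi\colon X\dashrightarrow Y$ as recalled just before the statement. Following \cite{MW25}, after a quasi-\'etale cover I expect $\psi$ to become a morphism that is a product projection $X'\cong R\times Y'$. Here $Y'$ has canonical singularities with $K_{Y'}\sim_\Q 0$, because $Y'$ is not uniruled and $K_{X'}+B'$ is trivial. The boundary $B'=\pi^*B$ must be vertical for the MRC projection: since $Y'$ is not uniruled, $B'$ is pulled back from the $R$-factor by the canonical bundle formula. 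Hence $B'=B_R\boxtimes Y'$ with $B_R=B'|_R$. Restricting $K_{X'}+B'$ and the klt condition to a fiber shows that $(R,B_R)$ is a projective klt $0$-pair.

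Second, I would decompose the base $Y'$. It has klt (in fact canonical) singularities and numerically trivial canonical class, so I would pass to its index-one cover and apply \cite{BGL22}. Together with Greb--Guenancia--Kebekus and H\"oring--Peternell, this yields a quasi-\'etale cover $Y''\to Y'$ with $Y''\cong Ab\times\prod_i CY_i\times\prod_j S_j$, where each factor has the listed properties. Base-changing $R\times Y'$ along $Y''\to Y'$ is quasi-\'etale, and composing with the first cover gives a quasi-\'etale cover of $X$.

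Finally, replacing the composite by its Galois closure keeps it quasi-\'etale and preserves the product structure. Indeed, the Galois closure is again a quasi-\'etale cover of $Y''$ times $R$, and quasi-\'etale covers of the factors have the same type. Abelian varieties are stable under such covers. The $CY_i$ and $S_j$ are stable because their defining conditions already quantify over all quasi-\'etale covers, and the $S_j$ have trivial $\pi_1^{\text{\'et}}$ of the regular locus.

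The main obstacle is the first step, namely making the MRC fibration split as a product compatible with $B$ after a quasi-\'etale cover. For this I would cite \cite{MW25} directly rather than reprove it.
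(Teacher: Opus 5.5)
Your outline follows the same route as the paper. The paper gives no proof of this statement and simply quotes it from \cite{BGL22} and \cite{MW25}, and your first two steps are fine as citations. One small correction in step one: $B'=B_R\times Y'$ is \emph{horizontal} over $Y'$, not vertical. The canonical bundle formula only shows that $B'$ has no components failing to dominate $Y'$ and that the moduli part is numerically trivial. That $B'|_{R\times\{y\}}$ does not move with $y$ is part of what you import from \cite{MW25}.

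\textbf{The gap is in the Galois step, the only step you argue yourself.} Stability of each factor type under quasi-\'etale covers does not imply that a quasi-\'etale cover of $Ab\times\prod_i CY_i\times\prod_j S_j$ is again such a product. Such covers correspond to arbitrary finite-index subgroups of $\hat\pi_1(Ab)\times\prod_i\hat\pi_1(CY_i^{\mathrm{reg}})\times\prod_j\hat\pi_1(S_j^{\mathrm{reg}})$, not only to products of subgroups. Here is an example:
\begin{itemize}
\item Let $V=\tilde V/(\mathbb{Z}/5)$ be a free quotient of a quintic by an action preserving the $3$-form. Then $V$ is an irreducible Calabi--Yau in the above sense, with $\pi_1(V)=\mathbb{Z}/5$.
\item Let $E'\to E$ be an isogeny with kernel $\mathbb{Z}/5$.
\item Then $(E'\times\tilde V)/(\mathbb{Z}/5)$, with the diagonal action, is an \'etale cover of $E\times V$.
\item Its Albanese map is a $\tilde V$-bundle over $E$ with nontrivial monodromy on $H^*(\tilde V)$, so it is not of the required form.
\end{itemize}

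Likewise, your claim that the Galois closure of $R\times Y''\to X$ has the form $R\times(\cdot)$ is unjustified. You never assumed $R\times Y'\to X$ is Galois. Moreover, a rationally connected klt $R$ can have large $\hat\pi_1(R^{\mathrm{reg}})$, even with quasi-\'etale covers that are not rationally connected. For example, take $R=(E\times E)/\mu_3$, with $E$ the curve with $j=0$ and $\mu_3$ acting diagonally. This is a rational klt surface with $K_R\sim_{\mathbb{Q}}0$, covered quasi-\'etale by an abelian surface. So taking the Galois closure can destroy the product structure, and even the MRC factor, that you built.

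\textbf{What is needed} is to arrange, before taking the Galois closure, that every quasi-\'etale cover of the product is induced by an isogeny of $Ab$. One way to do this:
\begin{itemize}
\item Start from a maximally quasi-\'etale Galois cover of $X$ (Greb--Kebekus--Peternell). Then for the split cover $R\times Y$ one gets $\hat\pi_1(R^{\mathrm{reg}})\cong\hat\pi_1(R)=1$, by Takayama's simple connectedness of rationally connected klt varieties. So the $R$-factor can no longer mix.
\item For the $S_j$, this is the condition $\pi_1^{\text{\'et}}(S_j^{\mathrm{reg}})=1$ in the statement, which your outline also does not produce. It comes from finiteness of these groups and passing to universal covers.
\item For the $CY_i$ you need the analogous finiteness of $\hat\pi_1(CY_i^{\mathrm{reg}})$. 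Alternatively, cite a version of the decomposition theorem that directly yields a Galois cover.
\end{itemize}
With this in place, the Galois closure is automatically of the form $R\times Ab'\times\prod_i CY_i\times\prod_j S_j$.
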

		\begin{lemma}
			\label{lem: BB}
			Keep the notation and assumptions of \cref{const: main}. Then for every closed point $o\in S$ there is an \'etale neighborhood $U$ of $o$, and a quasi-\'etale Galois cover $\pi: \X'\to \X_U$ such that
			$$(\X', \B_{\X'}) = (\Ra, \B_\Ra)\times_\Ko \A b\times_\Ko \prod_i \Cy_i\times_\Ko \prod_j \Sy_j$$
			where for every point $s\in U$ the decomposition $(\X'_s, \B_{\X'_s}) = (\Ra_s, \B_{\Ra_s})\times_\Ko \A b_s\times_\Ko \prod_i (\Cy_i)_s\times_\Ko \prod_j (\Sy_j)_s$ is as in \cref{th: BB-klt}.
		\end{lemma}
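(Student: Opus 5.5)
We would prove \cref{lem: BB} by applying \cref{th: BB-klt} to the central fiber $(\X_o, \B_o)$, then spreading the resulting cover and the product decomposition over an étale neighborhood of $o$.

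\emph{Step 1: the central fiber.} The fiber $(\X_o,\B_o)$ is a projective klt $0$-pair with $\Q$-coefficients, because the family is an elementary klt family and $K_\X+\B\sim_{\Q,S}0$. \cref{th: BB-klt} gives a quasi-étale Galois cover $\pi_o\colon X'_o\to\X_o$ with group $G$ and a splitting $(X'_o,B'_o)=(R,B_R)\times Ab\times\prod_i CY_i\times\prod_j S_j$.

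\emph{Step 2: extending the cover.} Let $\X^\circ\subseteq\X$ be the locus where $\X\to S$ is smooth and $\X_o^\circ=\X^\circ\cap\X_o$. The cover $\pi_o$ is étale over an open set $U_o\subseteq\X_o^{\rm reg}$ whose complement has codimension $\ge2$. By the local structure of klt families (flat, with fibers normal and klt), the fundamental group of the fiberwise regular locus is locally constant in $s$. Hence, after passing to an étale neighborhood of $o$ and using Grothendieck's specialization of étale fundamental groups for the family of smooth loci, the finite étale cover of $U_o$ extends to a finite étale $G$-cover of an open subset $\mathcal U\subseteq\X_U$. This $\mathcal U$ restricts to a big open set in every fiber. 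We then take $\X'$ to be the normalization of $\X_U$ in this cover, which makes $\pi\colon\X'\to\X_U$ quasi-étale and Galois. Flatness and normality of the fibers of $\X'/U$ should follow from the fibers having quotient-type, in particular klt, singularities, together with the fact that local quasi-étale covers of klt germs deform.

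\emph{Step 3: spreading the product decomposition.} Now $(\X'/U,\B_{\X'})$ is a klt family with trivial relative log canonical class, and its central fiber is a product. We would use that each factor type is deformation-rigid in the relevant sense, as in the proof of \cite{BGL22}.

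\begin{itemize}
\item The Albanese map of $\X'/U$ (relative $\mathrm{Alb}$, smooth over $U$ since $h^1(\Os)$ is constant by Hodge theory on the fibers) splits off $\A b$.
\item The relative MRC fibration splits off $(\Ra,\B_\Ra)$. This uses the upper semicontinuity of rational connectedness and the fact that $h^0(\Omega^{[p]})$ is constant in klt families with $K\equiv0$, which follows from the reflexive Hodge theory of \cite{MW25}.
\item The remaining factor deforms as a product, since the deformation functor of a product of factors with $h^0(T)=0$ and the stated vanishing of reflexive forms is the product of the deformation functors.
\end{itemize}

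A last finite étale shrinking of $U$ ensures the factors are families over $U$ with fibers of the required types. Each fiber then satisfies the conclusion of \cref{th: BB-klt} after possibly comparing with the fiberwise decomposition via uniqueness of the decomposition.

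\emph{Main obstacle.} We expect the difficulty to lie in Steps 2 and 3. There the quasi-étale cover must be extended uniformly and the splitting shown to hold fiberwise rather than only at $o$. The natural first attempt is to argue via the local constancy of reflexive Hodge numbers and of $\pi_1^{\text{ét}}$ of regular loci in the family. This reduces everything to the boundedness of the Galois index in \cite{BGL22,MW25}.
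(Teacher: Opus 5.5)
\textbf{The gap is in Step 2.} To extend the cover you claim that ``by the local structure of klt families (flat, with fibers normal and klt)'' the fundamental group of the fiberwise regular locus is locally constant, and you appeal to Grothendieck specialization for the family of smooth loci. Neither holds in that generality. Specialization of $\pi_1^{\textnormal{\'et}}$ for a non-proper smooth family requires tameness at infinity, i.e.\ a relative normal crossings compactification.

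A concrete counterexample: take the pencil $\{F_0+tG=0\}\subset\Pp^3\times\mathbb{A}^1$, where $F_0$ is a $16$-nodal Kummer quartic $\X_o=A/\pm1$ and $G$ does not vanish at the nodes. After shrinking $S$, the total space is smooth, $K_\X\sim 0$, and all fibers are canonical. So this is a projective elementary klt, lc-trivial family. Yet $A\to\X_o$ cannot extend, because the nearby fibers are smooth simply connected K3 surfaces. The conclusion of the lemma fails outright here. Since $\X$ is smooth, any quasi-\'etale cover of $\X_U$ is \'etale, so it restricts to an \'etale cover of the simply connected $A/\pm1$, which is trivial. But $\X_o$ itself is not of the form in \cref{th: BB-klt}.

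What rules this out is the hypothesis you never use: the log resolution $g\colon(\Y,\B_\Y)\to(\X,\B_\X)$ of \cref{const: main}, with $(\Y,E_\Y)$ log smooth over $S$. No such $g$ exists for the Kummer pencil: a component of $E_\Y$ through a crepant curve over a node would have discrepancy $0$, while its restriction to a nearby smooth fiber is exceptional over a smooth surface and so has positive discrepancy.

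\textbf{How the paper uses this hypothesis.} By Ehresmann for the relatively snc pair $(\Y,E_\Y)$, both $\Y$ and $\Y\smallsetminus E_\Y$ are topologically locally trivial over $S$. Hence so is $\X^{\textnormal{reg}}$, with $(\X^{\textnormal{reg}})_s=(\X_s)^{\textnormal{reg}}$. Over a small contractible $O\ni o$, this identifies $\pi_1$ of $(\X^{\textnormal{reg}})^{\an}_o$ with that of the regular locus of $\X^{\an}_O$. So $\pi_0$ extends to an analytic quasi-\'etale cover of $\X^{\an}_O$. The paper then algebraizes it: GAGA on the thickenings $\X_n$, Grothendieck existence over $\Spec\widehat{\Os}_{S,o}$, and Artin approximation.

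\textbf{How to repair your route.} Replace your klt-based claim by tame specialization (SGA1, Exp.~XIII) for $(\Y,E_\Y)/S$, and use purity of the branch locus to pass from $\Y\smallsetminus E_\Y$ to the regular loci. With that change, your purely \'etale route becomes a legitimate and somewhat cleaner alternative to the paper's analytic--formal--Artin argument. The boundedness of the Galois index from \cite{BGL22,MW25}, which you name as the main obstacle, plays no role in extending the cover.

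\textbf{Step 3.} The paper simply cites \cite[Proposition 2.28]{FX26}; your sketch does not replace that citation as written.
\begin{itemize}
\item Constancy of $h^0(\Omega^{[p]}_{\X_s})$ again comes from the simultaneous resolution (extension of reflexive forms to $\Y_s$ plus Hodge theory on $\Y/S$), not from klt-ness alone.
\item The ``product of deformation functors'' claim needs $h^0(T)=0$, which fails for rationally connected factors such as $\Pp^n$.
\item First-order deformations of singular fibers are not governed by $H^1(T)$.
\end{itemize}
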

		\begin{proof}
			Let $X = \X_o$, and consider a quasi-\'etale Galois cover $\pi_0: X'\to X$ as in \cref{th: BB-klt}. From our assumptions on the log resolution $g: (\Y, \B_\Y) \to (\X, \B_\X)$ it follows $\Y$ and $\Y\smallsetminus E_\Y$ are topologically locally trivial families over $S$. Then $\X^\text{reg}/S$ is so. This implies that for any sufficiently small contractible analytic neighborhood $O\subseteq S^\an$ of $o\in S$ the natural inclusion $(\X^\text{reg})_o^\an \hookrightarrow \X_O^\an$ induces an isomorphism $\pi_1\left((\X^\text{reg}\right)^\an_o) \to \pi_1(\X_O^\an)$. Again from our assumptions on $\Y/S, E_\Y/S$ it follows that $(\X_s)^\text{reg} = (\X^\text{reg})_s$ for all points $s\in S$. Hence $\pi_0: X'\to X$ corresponds to a quasi-\'etale cover $\pi^\an: (\X^\an_O)'\to \X_O^\an$ such that $\pi_o^\an =\pi_0^\an$ (cf.  \cite[Section 3.6]{BGL22}).
			
			Now, we show that the cover $\pi^\an: (\X^\an_O)'\to \X_O^\an$ induces a cover over $\mathfrak{X}: = \X\times_S \text{Spec}\, \widehat{R}$ where $R = \Os_{S, o}$ (cf. \cite[Proposition 7.4]{DG18}). Let $\mathfrak{m}\subset R$ be the maximal ideal, and set $R_n = R/\mathfrak{m}^n, \ \X_n = \X\times_S \Spec R_n$. Since the analytification commutes with fiber products and $(\Spec R_n)^\an\hookrightarrow S^\an$ factors through $O$, we have $\X_n^\an \cong \X^\an \times_O (\Spec R_n)^\an$. Define analytic sheaves of $\Os_{\X^\an_O}$-algebras $\mathscr{A}^\an := \pi^\an_*\Os_{(\X^\an_O)'}$ and $\mathscr{A}_n^\an := \mathscr{A}\otimes_{\Os_{\X^\an_O}} \Os_{\X_n^\an}$. Then for all $n\ge 1$ and $(\X_n^\an)' := (\X^\an_O)'\times_{\X^\an_O}\X^\an_n$ we obtain a Cartesian square:
			
			$$
				\xymatrix{
					(\X_n^\an)' \ar@{->}[r] \ar@{->}[d] & (\X_{n+1}^\an)' \ar@{->}[d] \\
					\X_n^\an \ar@{->}[r] & \X_{n+1}^\an.
				}
			$$
			
			Since for every $n\ge 1$ the scheme $\X_n$ is proper over $\Ko$, the coherent $\Os_{\X_n^\an}$-algebra $\mathscr{A}^\an_n$ is the analytification of a coherent $\Os_{\X_n}$-module $\mathscr{A}_n$ by \cite[Expos\'e XII, Th\'eor\`eme 4.4]{SGA1}, i.e. the GAGA principle. In fact, $\mathscr{A}_n$ is an $\Os_{\X_n}$-algebra. Then we can define $\X_n' := \Spec_{\X_n} \mathscr{A}_n$ which comes together with the cover $\pi_n: \X_n'\to \X_n$ such that $(\pi_n)_*\Os_{\X_n'} = \mathscr{A}_n$. By construction, we have a compatible system $(\mathscr{A}_n, \theta_n)$ of $\Os_\mathfrak{X}$-algebras where $\theta_n: \mathscr{A}_n\otimes_{\Os_{\X_{n+1}}}\Os_{\X_n}\to \mathscr{A}_{n+1}$ is an isomorphism of $\Os_{\X_{n+1}}$-algebras. According to the Grothendieck Existence Theorem \cite[Th\'eor\`eme 5.1.4]{EGAIII1}, there exists a coherent $\Os_\mathfrak{X}$-algebra $\mathscr{A}$ on $\mathfrak{X}$ such that $\mathscr{A}\otimes_{\Os_\mathfrak{X}} \Os_{\X_n} = \mathscr{A}_n$ for all $n\ge 1$. Then we set $\widehat{\pi}: \mathfrak{X}'\to \mathfrak{X}$ for $\mathfrak{X}': = \Spec_\mathfrak{X} \mathscr{A}$. Then by Artin's algebraic approximation theorem \cite[Corollary 2.2]{Art69}, there exists an \'etale neighborhood $U$ of $o\in S$ and a finite cover $\pi: \X'\to \X_U$ such that $\pi_o = \widehat{\pi}_o$ but $\widehat{\pi}_o = \pi_0$. In addition, we can assume that $\X'\to \X_U$ is quasi-\'etale. To conclude the proof, one needs to show that the decomposition for $\X_o' = X'$ spreads out over $U$ possibly after shrinking $U$ around~$o$. This is done in the proof for \cite[Proposition 2.28]{FX26}.
		\end{proof}
		
		\begin{theorem}
			\label{th: main}
			Keep the notation and assumptions of \cref{const: main}. Let $P$ be a connected component of $(\Phi^\CY)^{-1}(\star)$ for some point $\star\in \Gamma^\CY\backslash\D^\CY$. Then $(\X_s, \B_{\X_s}) \approx (\X_t, \B_{\X_t})$ for all (closed) points $s,t\in P$.
		\end{theorem}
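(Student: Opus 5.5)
First reduce exactly as in the proof of \cref{th: main_smooth}. By \cite[Theorem 1.1]{BBT23} the connected component $P$ is an algebraic closed subset of $S$. Chains of intersecting irreducible components let us assume $P$ is irreducible. Base change to a resolution $\widetilde P\to P$ then lets us assume $P=S$ is regular and $\Phi^\CY$ is constant on $S$. The hypotheses of \cref{const: main} (elementary klt family, log smoothness of $(\Y,\B_\Y)$ and $(\Y,E_\Y)$, smoothness of $f_{\widetilde\Z}$) should survive this base change, possibly after shrinking and stratifying. Isomorphism is a closed-point condition, and the connectedness argument glues strata. It is therefore enough to show that every closed point $o\in S$ has an \'etale neighborhood $U$ over which $(\X_U/U,\B)$ is isotrivial: then the isomorphism class is locally constant on the connected $S$.

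Fix $o$ and apply \cref{lem: BB} to get an \'etale neighborhood $U$ and a quasi-\'etale Galois cover $\pi\colon\X'\to\X_U$ with a fiberwise Beauville--Bogomolov splitting $(\Ra,\B_\Ra)\times\A b\times\prod\Cy_i\times\prod\Sy_j$. We need to see how the CY variation of \cref{const: main} sees each factor. The Hodge line bundle $\Ho=(f_\Y)_*\Os_\Y(N_\Y)$ is spanned by the canonical pluriform generator of $K_\X+\B\sim_{\Q,S}0$. After pulling back to $\X'$ and passing to an index-one cover, this generator is the exterior product of the top forms of the factors, together with the log-form of $(\Ra,\B_\Ra)$ in the $\chi$-isotypic part. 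By the K\"unneth formula, the transcendental part $\V^{\tr}$ is then the tensor product of the transcendental parts of the factors. Since the deepest Hodge piece is a tensor product of lines, constancy of the product period forces, via a Hodge-theoretic argument through the differential, the vanishing of the bottom graded piece $\sigma_s$ of each factor's differential. Concretely, $\sigma_s$ of the product is the sum of the factor maps, and these land in distinct K\"unneth summands $H^{d-1,1}$, so they are independent.

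For the smooth-type factors $\A b$, $\Cy_i$, $\Sy_j$, the infinitesimal Torelli argument of \cref{th: main_smooth} (contraction with a nowhere-vanishing reflexive top form on the regular locus) gives $\kappa_s=0$. The locally trivial deformation theory of canonical varieties then gives isotriviality, using the locally trivial Kodaira--Spencer map $T_S\to R^1f_*T_{\X'/S}$ on reflexive tangent sheaves. For the rationally connected factor $(\Ra,\B_\Ra)$, the generic fiber carries no periods beyond the $\chi$-part. Here I would instead argue by birational boundedness/rigidity. Ambro \cite[Theorem 2.2]{Amb05} shows that a constant moduli part makes the family birationally isotrivial over a dense open subset, with the log pairs crepant birational. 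Having generic isotriviality over $U$, I then upgrade to all of $U$ using \cref{prop: iso_spec}. I apply it to ample models obtained by adding a small multiple of the relative ample $\A$, or rather to the isomorphism of the generic fibers spread out after \'etale base change, as in the Matsusaka--Mumford remark. The klt hypothesis guarantees that no log centers lie over the complement. Finally I descend via the $G$-action: the fiberwise isomorphisms of $\X'_s$ can be chosen $G$-equivariantly, since the locally trivial deformation of a $G$-cover is governed by the $G$-invariant part of $H^1$, as in the last step of \cref{th: main_smooth}. This yields $(\X_s,\B_{\X_s})\approx(\X_t,\B_{\X_t})$.

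The main obstacle is the factor $(\Ra,\B_\Ra)$ together with the boundary. Infinitesimal Torelli fails there, and showing that a constant CY period forces isotriviality of the pair, not merely birational isotriviality, needs the ampleness input in \cref{prop: iso_spec}, which $K+B\equiv0$ does not provide directly. I would first try to perturb by $\epsilon\A$ so that $K_\X+\B+\epsilon\A$ becomes ample, and check that the generic isomorphism given by \cite{Amb05} preserves $\A$ up to a finite monodromy ambiguity, which can be killed by a further \'etale base change.
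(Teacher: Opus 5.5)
Your skeleton matches the paper's: reduce to $P=S$, localize at $o$, split via \cref{lem: BB}, handle the factors, then descend along the deck group. The factor-wise step, however, has a genuine gap.

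\textbf{The $\Cy_i$ and $\Sy_j$ factors.} You rely on infinitesimal Torelli plus ``locally trivial deformation theory'', but nothing in \cref{const: main} makes these families locally trivial. A simultaneous log resolution with relatively snc exceptional divisor does not force the analytic type of a canonical singularity to stay constant; for instance, the exceptional divisor of a cone singularity can move in moduli. Your contraction argument only controls classes in $H^1(T_V)$, i.e.\ locally trivial directions. The Kodaira--Spencer class of a flat family of singular varieties lives in $\textnormal{Ext}^1(\Omega_V,\Os_V)$ and may have a nonzero local component in $H^0(\mathcal{E}xt^1(\Omega_V,\Os_V))$, about which the argument says nothing. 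Even in the locally trivial setting, you would still need constancy of $h^1(T_V)$ to run the last step of \cref{th: main_smooth}. As an aside, $\V^{\tr}$ of a product is in general strictly smaller than the tensor product of the factors' transcendental parts (e.g.\ for $E\times E$); only the K\"unneth splitting of the Griffiths derivative is true, and that is all you use. The paper avoids infinitesimal arguments on singular factors entirely. The only property of $\Cy_i$ and $\Sy_j$ it uses is $h^1(\Os_V)=0$, which follows from $h^0(\Omega^{[1]}_V)=0$, and this puts them in the same position as $\Ra$.

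\textbf{The $\Ra$ factor and descent.} The step you leave open for $\Ra$ is where the paper's key idea sits. Constancy of $\Phi^\CY$ is used once: it makes the moduli part of the lc-trivial fibration $\Q$-trivial. Then \cite[Theorem 4.7]{Amb05}, applied to the whole family before splitting, gives after a quasi-\'etale base change an honest log isomorphism with a product over a dense open set. What you quote, crepant birational isotriviality (and for a factor whose moduli part you have not shown to be trivial), is not the input \cref{prop: iso_spec} needs. Next, because $h^1(\Os)=0$ on every non-abelian factor, $\textnormal{Pic}(R\times U)\cong\textnormal{Pic}(R)\times\textnormal{Pic}(U)$. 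So the relatively ample sheaf on the generic fiber extends over the product model, with no monodromy ambiguity to kill. Adding a general member $\mathcal H\in|\A^{\otimes m}|$ and its counterpart makes $K+B+\mathcal H$ relatively ample on both sides, and \cref{prop: iso_spec} applied on general curves through $o$ identifies the special fibers. Only the abelian factor uses Hodge theory, via \cref{th: main_smooth}.

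Your descent step has the same defect as the $\Cy_i$, $\Sy_j$ argument: a ``$G$-invariant part of $H^1$'' argument for singular pairs with boundary. The paper instead observes that once $\X'\cong X'\times U$, the deck action is a morphism $U\to\mathbf{Hom}_{\textnormal{gr}}(G,\mathbf{Aut}(X'))$. By \cite{Bri22} the connected components of this scheme are $\Aut^0(X')$-orbits, so all $\rho_s$ are conjugate and the quotients are isomorphic.

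Finally, no stratification is needed in your first reduction, since the hypotheses of \cref{const: main} are stable under base change. Isotriviality on the strata of a stratification would not glue anyway.
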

		\begin{proof}
			Let us remember that $(\Y, \B_\Y)$ is a projective elementary klt family, and $(\Y, \B_\Y)$ is log smooth over $S$ by \cref{const: main}. By \cite[Theorem 1.1]{BBT23} , $P$ is a closed algebraic subset of $S$. As in the beginning of proof for \cref{th: main_smooth}, we can assume that $P$ is a irreducible. Let $\nu: {P^\nu}\to P$ be a normalization, and $({\Y^\nu}, \B_{{\Y^\nu}}) = (\Y, \B_\Y)\times_S {P^\nu}, ({\X^\nu}, \B_{{\X^\nu}}) = (\X, \B_\X)\times_S {P^\nu}$. Since $\Phi^\CY(P) = \{\star\}$, we conclude that  the moduli part $M_{P^\nu}$ of the lc-trivial fibration $(\X^\nu, \B_{\X^\nu}) \to P^\nu$ is trivial, i.e. $M_{P^\nu}\sim _\Q 0$. Note that the boundary divisor $B_{P^\nu}$ is zero. Then by \cite[Theorem 4.7]{Amb05} there is a quasi-\'etale Galois cover $\tau: P'\to P^\nu$, an open dense subset $U\subseteq P'$, a log pair $(X, B_X)$, and an isomorphism $$(\X^\nu, \B_{\X^\nu})\times_{P^\nu} U \approx (X, B_X)\times_\Ko U \text{ over } U.$$
			Let $\mu: \widetilde{P}\to P'$ be a resolution of singularities, and $(\widetilde{\X}, \B_{\widetilde{\X}}), (\widetilde{\Y}, \B_{\widetilde{\Y}})$ be the corresponding new families over $\widetilde{P}$. Note that $(\widetilde{\Y}, \B_{\widetilde{\Y}})$ is a projective elementary klt family, and $(\widetilde{\Y}, \B_{\widetilde{\Y}})$ is log smooth over $\widetilde{P}$. By construction, the map $\alpha = \nu\circ\tau\circ\mu\colon\widetilde{P} \to P$ is surjective, and $(\widetilde{\X}_s, \B_{\widetilde{\X}_s}) \approx ({\X}_{\alpha(s)}, \B_{{\X}_{\alpha(s)}})$ because the base field $\Ko$ is algebraically closed. Consequently, we can denote the new families by $(\X/S, \B_{\X}), (\Y/S, \B_{\Y})$ to simplify the notation. It follows that the generic fibers of $({\Y}/S, \B_{{\Y}})$ and $({Y}, B_{{Y}})\times_\Ko S$ are 0-equivalent where $({Y}, B_{{Y}})\to (X, B)$ is any log resolution. Therefore, $({\Y}, \B_{{\Y}_s})\sim_\text{bir} ({Y}, B_{{Y}})$ for all closed points $s\in S$ according to \cref{th: bir_spec}. Hence, $({\X}_s, \B_{{\X}_s})\sim_\text{bir} (X, B)$ by \cref{def: 0-class}.
			
			However, we claimed that even stronger statement holds: $(\X_s, \B_{\X_s}) \approx (\X_t, \B_{\X_t})$ for all (closed) points $s,t\in S$. To prove it, we fix a closed point $o\in S$, and consider an \'etale neighborhood $U$ for $o$ as in \cref{lem: BB} for which there is a quasi-\'etale Galois cover $\X'\to {\X}_{U}$ inducing a singular Beauville-Bogomolov decomposition fiber-wise.  In other words, $$(\X', \B_{\X'}) = (\Ra, \B_\Ra)\times_\Ko \A b\times_\Ko \prod_i \Cy_i\times \prod_j \Sy_j$$
			where for every point $s\in U$ the decomposition $(\X'_s, \B_{\X'_s}) = (\Ra_s, \B_{\Ra_s})\times_\Ko \A b_s\times_\Ko \prod_i (\Cy_i)_s\times \prod_j (\Sy_j)_s$ is as in \cref{th: BB-klt}. Before we proceed further, we assume that the family $(\X'/U, \B_{\X'})$ is trivial over some open dense subset again by \cite[Theorem 4.7]{Amb05}, and let $$(X', B_{X'}) = (R, B_R)\times_\Ko Ab\times_\Ko \prod_i CY_i \times_\Ko S_j$$ be the induced decomposition for a general fiber of $(\X'/U, \B_{\X'})$. We claim that the families $(\Ra/U, \B_\Ra), \A b/U, \Cy_i/U, \Sy_j/U$ are isotrivial for all $i, j$. First, the isotriviality for $\A b/U$ follows from \cref{th: main_smooth}. Second, for every $s\in U$ the variety $\Ra_s$ is rationally connected with (at worst) klt singularities. This implies that $h^1(\Ra_s, \Os_{\Ra_s}) = 0$ by \cite{KM97}, and $\text{Pic}^0(\Ra_s) = 0$. Then $\text{Pic}(R\times_\Ko U) \cong \text{Pic}(R)\times \text{Pic}(U)$ by \cite[Chapter III, Exercise 12.6]{Har77}. Therefore, any invertible sheaf from the generic fiber $\Ra_\eta$ of $\Ra/U$ could be uniquely extended to an invertible sheaf on $R\times_\Ko U$ up to isomorphism. Choose an ample invertible sheaf  $\A$ on $\Ra/U$, and let $A\times_\Ko U$ be an extension of $\A_\eta$ to $R\times_\Ko U$. Similarly, we choose a general global section $\mathcal{H}$ of $\A^{\otimes m}$ for $m\gg 1$ which is regular and irreducible by Bertini's theorem, and consider the extension $H\times_\Ko U$ of $H_\eta$. Then by \cref{prop: iso_spec} for any general regular curve passing through $o\in S$ there is an isomorphism $(\Ra_C, \B_{\Ra_C} + \mathcal{H})\approx (R, B_R + {H})\times_\Ko C$. This implies that $(\Ra_o, \B_{\Ra_o})\approx (R, B_R)$. Third, for each point $s\in S$ and for all varieties $V\in\{(\Cy_i)_s, (\Sy_j)_s\}_{i,j}$ the vanishing of $h^1(\Os_V)$ follows from $h^0(\Omega^{[1]}_V) = 0$. Hence, \cref{prop: iso_spec} could be applied in the same way again. Therefore, after an \'etale base change we can assume that $(\X', \B_{\X'})\approx (X', B_{X'})\times_\Ko U$ over $U$, and $o\in U$.
			
			Finally, we use triviality of $(\X'/U, \B_{\X'})$ to deduce isotriviality for $(\X/U, \B_\X)$. Let $G$ be the deck group for $\X'/\X$, and $H = \textbf{Aut}(X')$ be group scheme such that $\textbf{Aut}(X')(\Ko) = \text{Aut}(X')$. Recall that every finite group in characteristic zero is reductive by Maschke's theorem. Then by \cite[Theorem 2]{Bri22} the functor $\mathbf{Hom}_\text{gr}(G, H)$ of morphisms of group schemes is represented by a smooth scheme $M$, and the morphism $H\times_\Ko M\to M\times_\Ko M$ given by $(h, \rho)\mapsto (h\rho h^{-1}, \rho)$ is smooth. By \cite[Remark 6.4]{Bri22}, the connected components of $M$ are exactly the $H^0$-orbits of $\Ko$-points, where $H^0:= \textbf{Aut}^0(X')$. By construction of $\X'/\X$, there is a morphism $U\to M$, i.e. a collection of maps $\rho_s: G\to \Aut(X')$. Its image is contained in a connected component of $M$. Hence, for every point $s\in U$ there is $h_s\in \Aut^0(X')$ such that $\rho_s = h_s\rho_o h_s^{-1}$. This implies that $\X_s\approx \X_o$ for all closed points $s\in U$, as required.  
		\end{proof}

		\subsection{Moduli space}\label{sec: moduli spaces}
		In this subsection we introduce a different approach for \cref{th: main} via the theory of moduli spaces leading to an independent results. We refer a reader to \cite{Kol23} that introduces all technical aspects of the KSBA theory. We follow the exposition in \cite[Section 7.3]{BFMT25} concerning compactifications of the moduli space for \textit{polarized klt 0-pairs}, that is, $(X, B; A)$ consists of a projective klt 0-pair $(X, B)$ with an ample invertible sheaf $A$ on $X$. Two polarized pairs $(X, B; A)$ and $(X', B'; A')$ are isomorphic if there is a log isomorphism $\varphi: (X, B) \to (X', B')$ such that $\varphi^* A' \approx A$ where the latter is an isomorphism of invertible sheaves. We shall assume that all boundaries $B$ are effective $\Q$-divisors.
		
		Fix $d, N\in \mathbb{Z}_{> 0}$, and $\mathbf{b}\in \Q^I$ for some $I\in \Zi_{\ge 0}$. Consider the $\mathbb{G}_m$-rigidified moduli problem for locally stable polarized\footnote{A polarization is the sheafification of a pre-polarization, i.e. an \'etale cover over the base together with relative ample invertible sheaves which agree on that cover \cite[Definition 8.40]{Kol23}.} families in the sense of Koll\'ar of polarized klt 0-pairs $(X, B; A)$ such that $\dim X = d$, $A^d = N$, and $B$ marked by $\mathbf{b}$. Then the corresponding functor $\mathcal{M}_{d, N, \mathbf{b}}$ is an algebraic stack over $\Ko$. When no ambiguity arises, we drop the index $\mathbf{b}$ from the notation. From \cite[Theorem 1.1]{Kol93} it follows that there is $n = n(d)\in \mathbb{Z}_{>0}$ such that $A^{\otimes n}$ is very ample. Hence, by the standard Hilbert--Chow scheme argument \cite[Proposition 2.15]{Ser26} all pairs $(X, B)$ can be embedded in some fixed projective space. Hence $\mathcal{M}$ is of finite type over $\Ko$. By \cite[Proposition 10.1]{PZ25}  the group $$\text{Aut}(X, B; A) := \{\varphi\in \text{Aut}(X): \varphi_*(B) = B, \ \varphi^*A \approx A\}$$ is finite. Then $\mathcal{M}_{d, N}$ is a separated Deligne--Mumford stack of finite type over $\Ko$ (cf. \cite[Theorem 8.1]{Kol23}). By \cite[Corollary 1.3]{KM97} the stack $\mathcal{M}_{d, N}$ admits a coarse moduli space $M_{d, N}$ which is a separated algebraic space of finite type. Next, there is a finite stratification of $\mathcal{M}_{d, N}^\text{red}$ (resp. $M_{d, N}$) by Zariski locally closed substacks $\mathcal{U}_i$ (resp. algebraic spaces $U_i$) analogues to families in \cref{const: main}. Hence, there are well-defined CY period maps $U_i\to \Gamma_i^{\CY}\backslash \mathbb{D}_i^\CY$. For any projective klt 0-pair $(X, B)$ we define the following sets:
		\begin{equation*}
			\begin{gathered}
				\Iso_{(X, B)}(M_{d, N}):= 
				\begin{Bmatrix}
					\ [(X', B'; A')]\in M_{d, N}(\Ko): & (X', B')\approx (X, B) \
				\end{Bmatrix},\\
				\textnormal{Wlc}_{(X, B)}(M_{d, N}) :=
				\begin{Bmatrix}
					\ [(X', B'; A')]\in M_{d, N}(\Ko): & (X', B')\sim_\text{bir} (X, B) \
				\end{Bmatrix}.
			\end{gathered}
		\end{equation*}
		\begin{theorem}
			\label{th: moduli_finite}
		Let $(X, B)$ be a projective klt 0-pair. Then the set $\textnormal{Wlc}_{(X, B)}(M_{d, N})$ is finite. In addition, every CY period map  $U_i\to \Gamma_i^{\CY}\backslash \mathbb{D}_i^\CY$ is quasi-finite.
		\end{theorem}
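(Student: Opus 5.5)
The plan is to prove the second statement (quasi-finiteness of the CY period maps) first, and then deduce the finiteness of $\textnormal{Wlc}_{(X, B)}(M_{d, N})$ from it together with \cite{Ser26}.

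\emph{Quasi-finiteness.} Fix a stratum $U_i$ and a point $\star\in\Gamma_i^\CY\backslash\D_i^\CY$. By \cite[Theorem 1.1]{BBT23}, the fiber $\Phi_i^{-1}(\star)$ is a closed algebraic subset of $U_i$, so it has finitely many connected components $P$. It therefore suffices to show that each $P$ consists of finitely many points of $M_{d,N}$.

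Over an étale atlas of the stratum $\mathcal{U}_i$ we have a family of the type in \cref{const: main}, and \cref{th: main} gives $(\X_s, \B_{\X_s})\approx(\X_t,\B_{\X_t})$ for all closed points $s,t\in P$. Hence every point of $P$ represents a polarized pair $(X_0, B_0; A')$ with a fixed underlying pair $(X_0,B_0)$; only the polarization $A'$ varies. So $P$ is contained in the image of the locus of ample classes $A'$ on $X_0$ with $A'^d=N$, and it remains to show that this locus is finite modulo $\Aut(X_0,B_0)$ and numerically trivial twists.

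To control the twists, choose a connected curve $C\subseteq P$, if $P$ is positive dimensional, and pull the family back to it. The underlying pairs are constant, so the family is étale-locally $(X_0,B_0)\times C$ with a varying polarization, classified by a map $C\to\textnormal{Pic}(X_0)$. Since $\textnormal{Pic}$ has discrete Néron--Severi part, this map lands in a single coset of $\textnormal{Pic}^0(X_0)$. Translation by $\textnormal{Pic}^0$ is realized by automorphisms: through the decomposition of \cref{th: BB-klt}, the abelian factor acts transitively on the relevant torsor. The $\mathbb{G}_m$-rigidification also forgets the scaling. Together these show the moduli map $C\to M_{d,N}$ is constant, so $P$ is a finite set. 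Alternatively, one can avoid curves altogether: the isomorphism class of $(X_0, B_0; A')$ is determined, up to finitely many possibilities, by the numerical class of $A'$ modulo $\Aut(X_0,B_0)$, and the number of such classes is finite by the cone/finiteness result of \cite{Ser26}.

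\emph{Finiteness of $\textnormal{Wlc}_{(X, B)}(M_{d, N})$.} Since there are finitely many strata $U_i$, it suffices to show that the intersection with each $U_i$ is finite. Any $(X',B')\sim_\textnormal{bir}(X,B)$ has a quasi-étale index-1 cover whose transcendental Hodge lattice is a crepant birational invariant of $(X,B)$. Indeed, $H^0$ of the pluricanonical-type sheaf $\Os_Y(N_Y)$ and the smallest Hodge substructure containing it are preserved under crepant birational maps of the covers, by the Burnside/specialization philosophy and the Hodge-theoretic invariance of the top piece. Consequently all points of $\textnormal{Wlc}_{(X,B)}(M_{d,N})\cap U_i$ map to finitely many points of $\Gamma_i^\CY\backslash\D_i^\CY$: the period is determined up to the choice of marking, and only finitely many polarizations $Q$ occur. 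By quasi-finiteness, the intersection is therefore finite.

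\emph{Main obstacle.} The step I expect to be hardest is the birational invariance of the CY period up to finite ambiguity: the resolutions and index-1 covers of birational models differ, and the integral transcendental lattice together with its polarization must be compared. My first approach would be to pass to a common log resolution. The crepant condition identifies $N_Y$ and $F_Y$ there, and one then argues that the lattices $\V^\tr_\Zi$ agree up to finite index and hence lie in finitely many $\Gamma$-orbits. If this fails, I would instead fall back on \cite{Ser26} to bound the 0-class by finitely many wlc models up to isomorphism, and then apply the first part to each model.
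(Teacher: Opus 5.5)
\paragraph{Verdict.} Your quasi-finiteness argument works, but your main route to finiteness of $\textnormal{Wlc}_{(X,B)}(M_{d,N})$ has a genuine gap. Your fallback, combined with your ``alternatively'' sentence, is exactly the paper's proof.

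\paragraph{Quasi-finiteness.} This half of your proposal is fine. Take an irreducible curve in a fiber of $U_i\to\Gamma_i^\CY\backslash\D_i^\CY$ and lift it to the atlas. The lift lies in one connected component of the fiber there, so \cref{th: main} fixes the underlying pair. The numerical class of the polarization is then locally constant. Algebraically equivalent polarizations are identified by a connected abelian group of automorphisms: the identity component of the translations commuting with the Galois group, which still maps isogenously onto $\textnormal{Pic}^0$. The paper gets this group from Ambro's splitting over $\textnormal{Alb}(X)$. Hence the moduli map contracts the curve, which is absurd. The paper's proof never spells out quasi-finiteness, and your argument supplies it without any cone-type input.

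\paragraph{The gap.} It lies in your claim that the points of $\textnormal{Wlc}_{(X,B)}(M_{d,N})\cap U_i$ have only finitely many CY periods.
\begin{itemize}
\item Suppose one grants invariance of the pointwise transcendental lattice under crepant birational maps. That would need a correspondence argument on the index-1 covers; Burnside groups only record crepant birational classes and say nothing about integral lattices.
\item Even then, this is not what the CY period records. By \cref{def: transcendental}, $\V^\tr$ is the smallest sub-VHS of the family over the stratum containing $F^n\V$. Its fiber at $s$ can be strictly larger than the transcendental lattice of $(\X_s,\B_{\X_s})$ and can depend on the polarization.
\end{itemize}

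Here is a concrete failure. Take smooth polarized K3 surfaces of degree $N$: $B=0$, $m=1$, $\V=R^2f_*\Zi$, and no resolution or cover is needed.
\begin{itemize}
\item A very general member has Picard number one, so $\V^\tr=c_1(\A)^\perp$.
\item The CY period map is therefore the classical one, which is injective by the Global Torelli Theorem.
\item So $(X,A)$ and $(X,A')$ with $A'\notin\Aut(X)\cdot A$ have different periods.
\end{itemize}
Already for the points of $\Iso_X(M_{2,N})\subseteq\textnormal{Wlc}_{(X,0)}(M_{2,N})$, your claim is equivalent to finiteness of degree-$N$ polarizations on $X$ up to automorphisms (Sterk's theorem). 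That is the content to be proved, not a consequence of birational invariance. Finiteness of polarizations on a fixed Hodge structure does not rescue this, because the Hodge structure $c_1(A)^\perp$ itself changes with $A$.

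\paragraph{What closes the argument.} Use your fallback together with your ``alternatively'' sentence, which is precisely the paper's proof:
\begin{enumerate}
\item Apply \cite[Theorem 5.5]{Ser26} to the very ample sheaf $A'^{\otimes n(d)}$, of degree $n(d)^dN$. This bounded-degree hypothesis is what makes the theorem applicable, and it yields finitely many log isomorphism classes $(X_\al,B_\al)$.
\item Use \cite[Theorem 5.6]{Ser26} to get finitely many numerical classes of polarizations on each $(X_\al,B_\al)$.
\item Absorb algebraic equivalence via the abelian group action described above.
\item Use finiteness of $\textnormal{Pic}^{\textnormal{tor}}(X_\al)/\textnormal{Pic}^0(X_\al)$.
\end{enumerate}
Note that ``applying the first part to each model'' does not by itself bound $\Iso_{(X_\al,B_\al)}(M_{d,N})$. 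Quasi-finiteness helps only once you know these points have finitely many periods, which is the same obstruction. Step (2), or an equivalent cone-type input, is indispensable there.
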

		\begin{proof}
			Recall that there is $n=n(d)\in \Zi_{>0}$ such that for every triple $[(X', B'; A')]$ the invertible sheaf $A'^{\otimes n}$ is very ample. By \cite[Theorem 5.5]{Ser26}, the set $\Cc$ of (log) isomorphism classes of all projective klt 0-pairs $(X', B')\sim_\text{bir} (X, B)$ admitting a very ample invertible sheaf of bounded degree is  finite. Let $(X_\al, B_\al)$ be pairs representing all classes in $\Cc$. Clearly, we have the following disjoint union of finitely many sets:
			\begin{equation*}
				\textnormal{Wlc}_{(X, B)}(M_{d, N}) = \bigsqcup_\al \Iso_{(X_\al, B_\al)}(M_{d, N}).
			\end{equation*}
			Next, by \cite[Theorem 5.6]{Ser26} for every $\al$ there finitely many ample invertible sheaves $A_{\al\beta}$ on $X_\al$ such that
			\begin{equation*}
				\Iso_{(X_\al, B_\al)}(M_{d, N})= \bigsqcup_\beta
				\begin{Bmatrix}
					\ [(X', B'; A')]\in M_{d, N}(\Ko): & \varphi: (X', B')\approx (X_\al, B_\al), \ \varphi_*A' \equiv A_{\al \beta} \
				\end{Bmatrix}.
			\end{equation*}
			Each set in the latter decomposition is finite by \cite[Section 7.3 (ii)]{BFMT25}, and the proposition follows. Let us reproduce the argument for completeness. To ease the notation, we omit the index $\al$. Consider its Albanese morphism $f: (X, B) \to \text{Alb}(X)$. Then there exists (by \cite[Theorem 4.8]{Amb05}) an  \'etale Galois cover $\tau: Ab\to \text{Alb}(X)$ such that $(X, B)\times_{\text{Alb}(X)} Ab \approx (F, B|_F)\times_\Ko Ab$ over $Ab$ where $(F, B|_F)$ is any (closed) fiber of $f$. It is clear that $Ab$ is an abelian variety. Let $G$ denote the deck group for $\tau$. Note that the automorphisms of $(F, B|_F)\times_\Ko Ab$ induced by translations of $Ab$ are $G$-equivariant, and they preserve the boundary. Hence, there is a morphism of groups $h: Ab\to \Aut(X, B)$. In fact, we have $h: Ab\to \Aut^0(X, B)$ because $Ab$ is connected. It is standard that the map $Ab\to \text{Pic}^0(X)$ given by $a\mapsto h(a)^*A\otimes A^{-1}$ is an isogeny for any fixed ample invertible sheaf $A$ on $X$. Hence, we have $[(X, B; A)] = [(X, B; A')] \in M_{d, N}$ if $A\sim_\text{alg} A'$. To conclude the proof it remains to use the fact \cite[Corollary 9.6.17]{FGA05} that the group $\text{Pic}^\text{tor}(X)/\text{Pic}^0(X)$ is finite where $\text{Pic}^\text{tor}(X) := \{L\in \text{Pic}(X): L\equiv 0\}$.
		\end{proof}
		\begin{corollary}[{\cite[Corollary 1.2]{BBT23}}]
			All algebraic spaces $U_i$ are quasi-projective.
		\end{corollary}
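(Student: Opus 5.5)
The plan is to use the quasi-finiteness part of \cref{th: moduli_finite} together with the main result of \cite{BBT23}. Recall that \cite[Corollary 1.2]{BBT23} states the following. Let $U$ be an algebraic space (or a reduced separated Deligne--Mumford stack with a coarse space) carrying a pure polarizable $\Zi$-VHS. Suppose its period map $U^\an\to\Gamma\backslash\D$ is quasi-finite. Then $U$ is quasi-projective, because the image of the period map is a quasi-projective variety (o-minimal GAGA together with the ampleness of the Griffiths bundle on the image), and a quasi-finite map to it makes the Griffiths/Hodge bundle ample on $U$. So the argument reduces to checking that each stratum $U_i$ meets the hypotheses of that corollary.

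First, I fix a stratum $U_i$ of $M_{d,N}$. By construction it is the coarse space of a locally closed reduced substack $\mathcal{U}_i\subseteq\mathcal{M}^\text{red}_{d,N}$ over which the universal family admits the data of \cref{const: main}. Refining the stratification if needed, we may assume the following: a simultaneous log resolution, an index-1 cover, and an equivariant resolution with $f_{\widetilde{\Z}}$ smooth all exist after passing to an étale atlas. This yields a polarizable CY $\Zi$-VHS $\V$ on the atlas, and its transcendental part $\V^\tr$ descends to $\mathcal{U}_i$ as a VHS with finite-order automorphism action. Passing to a finite-index neat subgroup of $\Gamma_i^\CY$ if necessary, the CY period map factors through $U_i$, giving $\Phi_i\colon U_i^\an\to\Gamma_i^\CY\backslash\D_i^\CY$. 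This is definable by \cite[Theorem 1.1]{BBT23}.

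Second, by \cref{th: moduli_finite}, $\Phi_i$ is quasi-finite. Applying \cite[Corollary 1.2]{BBT23}, which covers quasi-finite period maps of algebraic spaces, or more precisely its stack version with finite stabilizers, shows that $U_i$ is quasi-projective. The finite stabilizers are provided by \cite[Proposition 10.1]{PZ25}.

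I expect the main obstacle to be the descent step: the VHS is defined only étale-locally on the stack, and the stabilizers $\Aut(X,B;A)$ act on it. One must check that the monodromy representation and period map are well defined on $U_i$ itself, up to the finite group action. I would handle this by replacing $\Gamma_i^\CY$ with the group generated by the monodromy and the (finite) automorphism actions. This group is still arithmetic, so its quotient of $\D_i^\CY$ is still an algebraic target, and the statement of \cite{BBT23} applies verbatim.
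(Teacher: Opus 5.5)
This is exactly the paper's route: the corollary is stated right after \cref{th: moduli_finite} and follows by feeding the quasi-finiteness of the CY period maps $U_i\to\Gamma_i^\CY\backslash\D_i^\CY$ into \cite[Corollary 1.2]{BBT23}, with the existence of these period maps on $U_i$ taken as part of the set-up. Your extra descent remarks are not needed and are slightly off. Shrinking $\Gamma_i^\CY$ to a neat subgroup makes factoring through $U_i$ harder, not easier, and a group generated by monodromy need not be arithmetic, so simply keep the full arithmetic group $\Gamma_i^\CY=\Aut(V_\Zi,Q)$ as in \cref{def: CY_period_map}.
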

		\begin{corollary}
			Keep the notation and assumptions of \cref{const: main}. Let $P$ be a connected component of $(\Phi^\CY)^{-1}(\star)$ for some point $\star\in \Gamma^\CY\backslash\D^\CY$. Then $(\X_s, \B_{\X_s}) \approx (\X_t, \B_{\X_t})$ for all (closed) points $s,t\in P$.
		\end{corollary}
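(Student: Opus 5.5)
The plan is to deduce the corollary from \cref{th: moduli_finite} through the moduli map, which is the "different approach" announced at the start of this subsection. As in the proofs of \cref{th: main_smooth} and \cref{th: main}, I first use \cite[Theorem 1.1]{BBT23}: $P$ is a closed algebraic subset of $S$. Points on two intersecting irreducible components can be linked through a common point, so it suffices to treat an irreducible $P$. After replacing $P$ by a resolution of its normalization and pulling back the families, I may assume $P=S$ is regular and irreducible, and that $\Phi^\CY$ is constant on $S$. Fiberwise isomorphism types are unchanged by this base change, because $\Ko$ is algebraically closed.

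Next I fix an ample invertible sheaf $\A$ on $\X/S$ with constant degree $N=\A_s^d$ and a fixed marking $\mathbf{b}$ of the boundary. Since $(\X/S,\B_\X)$ is a projective elementary klt family with $K_\X+\B_\X\sim_{\Q,S}0$, it is locally stable. It therefore defines a morphism $S\to\mathcal{M}_{d,N}$, and hence a morphism $\mu\colon S\to M_{d,N}$. Since $S$ is irreducible, after shrinking to a dense open $S^\circ\subseteq S$ the image $\mu(S^\circ)$ lies in a single stratum $U_i$. The stratification is built from families as in \cref{const: main}, so on $S^\circ$ the composite $U_i\to\Gamma_i^\CY\backslash\D_i^\CY$ agrees with $\Phi^\CY$, up to the choice of polarization on $\V^\tr$ and of base point; the remark after \cref{def: CY_period_map} handles the base point. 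The composite is therefore constant on $S^\circ$. By the quasi-finiteness in \cref{th: moduli_finite}, $\mu(S^\circ)$ is a finite set. Because $S^\circ$ is irreducible, $\mu(S^\circ)$ is a single point, and so all fibers over $S^\circ$ are log isomorphic to one fixed pair $(X,B)$.

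It remains to pass from $S^\circ$ to all of $S$. Over $S^\circ$ every fiber is log isomorphic to $(X,B)$, hence in particular crepant birational to it. So for every closed $t\in S$ I get $(\X_t,\B_{\X_t})\sim_\bir(X,B)$, either by \cref{th: bir_spec} applied to the log resolution $\Y/S$ after an étale trivialization over the generic point, or directly by the argument in the first part of the proof of \cref{th: main}. The image $\mu(S)$ therefore lies in $\textnormal{Wlc}_{(X,B)}(M_{d,N})$, which is finite by \cref{th: moduli_finite}. Since $S$ is irreducible, $\mu$ is constant, and $(\X_s,\B_{\X_s})\approx(\X_t,\B_{\X_t})$ for all closed $s,t$. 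This argument is cleaner than the first part, because it bypasses the stratification altogether.

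The main obstacle I expect is the identification of $\Phi^\CY$ on $S$ with the stratum's period map: the two could differ by the choice of polarization $\Qp$ and of the index-1 cover and resolution. I would handle this by noting that constancy of a period map depends only on the underlying $\Q$-VHS $\V^\tr$. Any two constructions give $\V^\tr$ that are isogenous up to a shift of the Hodge filtration, since the transcendental part is determined by $\Os_\Y(N_\Y)$, which is canonical up to crepant birational change. So constancy transfers. Note, however, that the final paragraph only uses constancy of $\Phi^\CY$ on $S$ to obtain generic isotriviality over $S^\circ$. That step could alternatively be taken from \cite[Theorem 4.7]{Amb05} exactly as in \cref{th: main}, which would sidestep this obstacle if the comparison fails.
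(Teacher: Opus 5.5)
Your argument is correct, but it takes a longer route than the one the paper appears to intend. The paper gives no proof: the corollary is stated right after \cref{th: moduli_finite} and seems meant to follow from its quasi-finiteness clause alone. A relative polarization gives $\mu\colon S\to M_{d,N}$, and the connected set $P$ meets finitely many strata $U_i$. On each stratum the CY period map is quasi-finite while $\Phi^\CY$ is constant along $P$. Hence $\mu(P)$ is a finite set of closed points, and connectedness of $P$ makes it a single point. This needs no reduction to irreducible $P$, no resolution, no \cref{th: bir_spec} and no Wlc-finiteness.

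What that route silently requires is identifying $\Phi^\CY$ with the stratum period map on \emph{every} stratum that $P$ meets, which is exactly the obstacle you flag. Your route needs this only on the generic stratum, and your Ambro fallback removes it entirely. With the fallback, your proof is the first half of the proof of \cref{th: main} followed by the argument of \cref{th: 0-pair_iso}. It then uses only the Wlc-finiteness clause of \cref{th: moduli_finite}, which is the clause whose proof is actually written out; quasi-finiteness is not argued in that proof. So your version is longer but rests on less.

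I would lead with the Ambro variant. Your justification of the comparison (``the transcendental part is determined by $\Os_\Y(N_\Y)$'') is only a heuristic. You would have to check that different log resolutions, index-1 covers and equivariant resolutions give birational $\widetilde{\Z}$ whose transcendental sub-VHS agree after the shifts in \cref{const: main}. If you keep the stratum version, the ``\'etale trivialization over the generic point'' also needs the usual finite-type Isom-scheme argument, which is available because $\mu$ is constant on $S^\circ$.

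Finally, both routes tacitly need $\B_\X$ effective for $\mu$ to exist. That is the standing assumption of \cref{sec: moduli spaces}, but it is not part of \cref{const: main}.
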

		
	\section{Applications}\label{sec: applications}
		\subsection{Smooth case.}
		\begin{conjecture}
			\label{conj: rigidity(smooth)}
			Let $\X/S$ be a smooth projective family of varieties such that $K_{\X_s}$ is nef for every closed point $s\in S$. Suppose that $\X/S$ is potentially birationally isotrivial. Then $\X/S$ is isotrivial.
		\end{conjecture}
			Classically, conjecture holds for one extremal case when the fibers have maximal Kodaira dimension, i. e. $\kappa(\X_s) = \dim \X/S$ for all $s\in S$. So, we prove it for another extremal case: $\kappa(\X_s) = 0$. Although \cref{th: torsion_iso} is a particular case of \cref{th: 0-pair_iso}, its proof is rather elementary and illustrative.
		\begin{theorem}
			\label{th: torsion_iso}
			Let $\X/S$ be a smooth projective family of K-torsion varieties. Suppose that $\X/S$ is potentially birationally isotrivial. Then $\X/S$ is isotrivial.
		\end{theorem}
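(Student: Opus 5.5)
The plan is to reduce to \cref{th: main_smooth} by showing that the period map is constant on $S$. Since the conclusion concerns closed points only, we may replace $S$ by an \'etale cover and by its connected components. By \cref{lem: BB} (or directly by the Beauville--Bogomolov decomposition for smooth K-torsion varieties, which spreads out \'etale locally), we may assume there is an \'etale Galois cover $\pi\colon \X'\to\X$ with $\Omega^d_{\X'_s}\approx\Os_{\X'_s}$ for all $s$, so that \cref{constr: period_maps_smooth} applies. Fix an ample $\A$ on $\X'/S$ and consider the full period map $\Phi^\full\colon S^\an\to\Gamma^\full\backslash\D^\full$. By \cref{th: main_smooth}, it suffices to show that $\Phi^\full$ is constant, because $S$ is connected and then $S$ is a single connected fiber of $\Phi^\full$.

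First, we show that birational fibers are isomorphic. Let $U\subseteq S$ be the dense set of closed points with $\X_s\sim_\bir X$. Smooth projective varieties with $K$ nef (here numerically trivial) which are birational are connected by flops. In particular they are $K$-equivalent, and by Batyrev/Kontsevich or Ito they have equal Hodge numbers. More to the point, for $s,t\in U$ the birational map $\X_s\dashrightarrow\X_t$ is an isomorphism in codimension one. We upgrade this as follows. The pairs $(\X_s,0)$ are $0$-pairs, so by \cite[Theorem 5.5]{Ser26} (as in the proof of \cref{th: moduli_finite}) the set of isomorphism classes of varieties in the birational class of $X$ carrying a polarization of bounded degree is finite. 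Boundedness of the degree of $\A_s$ is automatic in the family. Hence $U$ is covered by finitely many sets $\Iso_{X_\al}(\X/S)\cap U$, and one of them, say $\Iso_{X_1}$, is Zariski dense. So $\X/S$ is potentially isotrivial.

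Next, we show the period map is constant. The set $\Iso_{X_1}(\X/S)$ is constructible, being the image of an $\textbf{Isom}$-scheme, which is of finite type over $S$ since the fibers are polarized with bounded data. Being dense, it contains a dense open $V\subseteq S$. Over $V$ all fibers are isomorphic to $X_1$, so after an \'etale base change the family $\X'_V\to V$ becomes trivial, or at least has locally constant periods: on the \'etale cover the Hodge structures of the fibers are all isomorphic to that of $X_1'$. The period map restricted to $V$ then takes values in a countable set, namely the $\Gamma$-orbits of isomorphic polarized Hodge structures, with only countably many polarization choices. Since it is holomorphic and $V$ is connected, it is constant on $V$. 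By continuity and density of $V$, $\Phi^\full$ is constant on all of $S$. Applying \cref{th: main_smooth} with $P^\full=S$ gives $\X_s\approx\X_t$ for all closed $s,t\in S$.

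The main obstacle is the countability/constancy step. The polarization $\A_s$ transported through an abstract isomorphism $\X_s\approx X_1$ need not match a fixed polarization on $X_1$. I would handle this by noting that the cup-product polarization $\Qp^\full$ depends only on the integral lattice and the class of $\A_s$. The class of $\A_s$ ranges over a countable set $NS(X_1)$, so the period point lies in a countable subset of $\Gamma^\full\backslash\D^\full$. A holomorphic map from a connected space into a countable set is constant. If this is delicate, I would instead use the transcendental period map $\Phi^\tr$, which is independent of the polarization up to the finitely many choices of $\Qp^\tr$.
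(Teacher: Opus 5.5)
You reduce correctly to \cref{th: main_smooth}, and your use of \cite[Theorem 5.5]{Ser26} to single out one isomorphism class $X_1$ with $\Iso_{X_1}(\X/S)$ dense is fine. The gap is the next step. You claim $\Iso_{X_1}(\X/S)$ is constructible because $\mathbf{Isom}_S(\X, X_1\times_\Ko S)$ is of finite type over $S$ ``since the fibers are polarized with bounded data''. That is false.

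The $\mathbf{Isom}$-scheme is only locally of finite type. It is a countable union of quasi-projective pieces, indexed by the Hilbert polynomial of the graph, which is governed by the numerical class of $\phi_*\A_s$ on $X_1$. The polarization $\A_s$ on the source does not bound this class, because $X_1$ has no fixed polarization. Already over a point, $\mathbf{Isom}$ is $\mathbf{Aut}(X_1)$, which has infinitely many components for a K3 or abelian surface with infinite automorphism group.

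So a priori $\Iso_{X_1}(\X/S)$ is only a countable union of constructible sets. Such a union can contain a Zariski dense set of points without containing any dense open set; think of $\Zi\subset\Ko$. Constructibility requires that degree-$N$ polarizations of $X_1$ fall into finitely many $\Aut(X_1)$-orbits up to numerical equivalence. That is \cite[Theorem 5.6]{Ser26}, the Sterk-type input behind \cref{th: moduli_finite}, and you never invoke it. The paper's final example shows that in the lc setting $\Iso$ really can be dense without being constructible, so this step is not a formality. The obstacle you flagged, countability of the period image on $V$, is harmless. The real difficulty is passing from a dense set to an open set.

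The paper avoids openness altogether. After passing to the étale cover of \cref{lem: BB}, it works with $\Phi^\tr$. The transcendental Hodge structure is a birational invariant, because $H^{d,0}$ is. Combined with finiteness of polarizations on a fixed Hodge structure \cite[Lemma 2.22]{BFMT25}, this shows $\Phi^\tr$ takes only \emph{finitely} many values on the dense set of birational fibers. The fibers of $\Phi^\tr$ are Zariski closed by \cite[Theorem 1.1]{BBT23}. Their finite union is therefore a closed set containing a dense set, so by irreducibility a single fiber is all of the base, and \cref{th: main_smooth} finishes. Finiteness together with algebraicity of the fibers is what makes density sufficient; countability alone, or mere analytic closedness of the fibers, would not be enough.

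If you apply your $\Phi^\tr$ fallback in this way, directly on the dense set, your Steps 2 and 3 and \cite{Ser26} become unnecessary. Alternatively, adding \cite[Theorem 5.6]{Ser26} repairs your route as written, but it uses the paper's deepest finiteness input for a statement that does not need it.
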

		\begin{proof}
			Fix a closed point $o\in S$, and consider its \'etale neighborhood $U$ as in \cref{lem: BB}. Then there is an \'etale Galois cover $\X'\to \X_U$ such that $\X' = \prod_i \X_i'$, and $\Omega_{\X'_s}^d\approx \Os_{\X'_s}$ for all $s\in U$, here $d = \dim \X'/U$. By the hypothesis, there is variety $X$ such that $\text{Bir}_X(\X/S)$ is dense in $S$. Put $X' = \X'_{s_0}$ for some closed point $s_0\in \text{Bir}_X(\X_U/U)$. Then we can assume that $\text{Bir}_{X'}(\X'/U)$ is also dense in $U$.  Let $\Phi^\tr: U^\an \to \Gamma^\tr \backslash \D^\tr$ be a period map as in \cref{constr: period_maps_smooth}. Since $H^{d, 0}(X') = H^0(X', \Omega^d_{X'})$ is a birational invariant, we obtain isomorphism  $\V^\tr_s\approx \V^\tr_t$ of unpolarized Hodge structures for all closed points $s,t \in \text{Bir}_{X'}(\X'/U)$. By finiteness of polarizations for a fixed Hodge structure \cite[Lemma 2.22]{BFMT25}, the image $\text{Bir}_{X'}(\X'/U)$ under $\Phi^\tr$ is finite. Meanwhile, $\text{Bir}_{X'}(\X'/U)$ is dense in $U$, and $U$ is irreducible. This implies that $\Phi (U^\an)$ is a one-point set. By \cref{th: main_smooth}, $\X_U/U$ is isotrivial. This concludes the proof because the argument applies to every closed point $o\in S$. 
		\end{proof}
		
		\subsection{General case.}
		\begin{conjecture}
			\label{conj: rigidity}
			Let $(\X/S, \B)$ be a projective elementary klt family such that every fiber is a wlc model.  Suppose that $(\X/S, \B)$ is potentially birationally isotrivial. Then $(\X/S, \B)$ is isotrivial.
		\end{conjecture}
		\begin{remark}
			The conjecture holds when every fiber is of (log) general type.
		\end{remark}
		
		\begin{theorem}
			\label{th: 0-pair_iso}
			Let $(\X/S, \B)$ be a projective elementary klt family which is a lc-trivial fibration.  Suppose that $(\X/S, \B)$ is potentially birationally isotrivial, and $\B$ is effective. Then $(\X/S, \B)$ is isotrivial.
		\end{theorem}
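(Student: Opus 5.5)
The plan follows the proof of \cref{th: torsion_iso}, with \cref{th: main} replacing \cref{th: main_smooth} and the CY period map of \cref{def: CY_period_map} replacing $\Phi^\tr$. Since isotriviality is local on $S$, it suffices to show that all fibres are log isomorphic to one another. Using the finite stratification of \cref{sec: moduli spaces}, we would reduce to the case where $(\X/S,\B)$ satisfies the assumptions of \cref{const: main}. Concretely, after passing to finitely many locally closed strata and to generically finite (resolving, normalizing) base changes, we fix a log resolution $(\Y,\B_\Y)$ that is log smooth over the base, an index-1 cover, and a smooth $\mu_m$-equivariant resolution $\widetilde{\Z}/S$. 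For each stratum $S_i$ meeting the dense set $\text{Bir}_{(X,B)}(\X/S,\B)$ in a dense subset of $S_i$, this produces a CY period map $\Phi^\CY_i$. Strata meeting it only in a non-dense subset are handled by noetherian induction, applied to the closure of the birational-isotrivial locus inside that stratum.

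On such a stratum we would show that $\Phi^\CY_i$ takes finitely many values on $\text{Bir}_{(X,B)}(\X/S,\B)\cap S_i$. The key input is that, for $s$ in this set, the transcendental Hodge structure $\V^\tr_s$ is a crepant birational invariant of $(\X_s,\B_s)$. The reason is that $F^{n}\V_s=H^0(\Y_s,\Os(N_{\Y_s}))$, and the $\chi$-isotypic piece of the index-1 cover, are determined by the pluriform $\omega$ with $\text{div}(\omega)=-m\B$, as in the proof of \cref{th: bir_spec}. Two crepant-equivalent pairs have birational index-1 covers whose smooth models are $\mu_m$-equivariantly birational. Since the transcendental part is a birational invariant of smooth projective varieties (it is the smallest sub-Hodge structure containing $H^{d,0}$, and it survives blow-ups), it follows that $\V^\tr_s$ is isomorphic, as an unpolarized $\Zi$-Hodge structure, to a fixed one depending only on $(X,B)$. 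By \cite[Lemma 2.22]{BFMT25}, a fixed Hodge structure admits only finitely many polarizations up to the arithmetic group, so $\Phi^\CY_i(\text{Bir}\cap S_i)$ is finite.

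Because $\text{Bir}\cap S_i$ is dense in the irreducible stratum and $\Phi^\CY_i$ is algebraic by \cite{BBT23}, the image of $S_i$ is a single point. By \cref{th: main}, all fibres over $S_i$ are then log isomorphic. Alternatively, we could invoke \cref{th: moduli_finite}: $\textnormal{Wlc}_{(X,B)}(M_{d,N})$ is finite, so the moduli map from $S$ (after choosing a relative polarization, which exists because the family is projective) sends a dense set to finitely many points. Hence the map is constant on the irreducible $S$, and all fibres are isomorphic to $(X,B)$. This second route avoids the Hodge-theoretic invariance argument entirely, and it is the approach I would try first. It needs the classifying map $S\to M_{d,N}$ to exist, which holds because an elementary klt family that is an lc-trivial fibration with effective $\B$ is locally stable, and the polarization $\A_s$ has constant degree.

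The main obstacle is the gluing step: passing from constancy on a dense open subset, or on strata, to a log isomorphism at every closed point. With the moduli approach this is automatic, since a morphism to a separated space that is constant on a dense set is constant. With the period approach it would rely on \cref{th: main} applied to connected components, which also covers points outside the stratum where \cref{const: main} applies. Any remaining boundary points would be handled by \cref{prop: iso_spec} along curves, with $K_\X+\B$ made relatively ample by adding a general member of $|\A^{\otimes k}|$ as in the proof of \cref{th: main}.
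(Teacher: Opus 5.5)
Your second route is exactly the paper's proof, and it is correct: fix a relative ample $\A$, use local stability to get the classifying morphism $S\to M_{d,N}$, apply \cref{th: moduli_finite} to see that the dense set $\text{Bir}_{(X,B)}(\X/S,\B)$ maps to finitely many points, and use irreducibility of $S$ to conclude the image is a single point. One small correction: this shows all fibres are log isomorphic to one another (e.g.\ to a fibre over a point of the birational locus), not to the auxiliary pair $(X,B)$. The period-map route you lay out first is unnecessary. As written, its noetherian-induction step on strata where the birational locus is not dense would not reach all points without the separate specialization argument you mention at the end, so you are right to prefer the moduli argument.
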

		\begin{proof}
			By the hypothesis, there is a proper log pair $(X, B)$ such that $\text{Bir}_{(X, B)}(\X/S, \B)$ is dense in $S$. From definition of a projective elementary klt family it follows that $(\X/S, \B)$ is a locally stable family. Choose an ample invertible sheaf $\A$ on $\X/S$. Then there is a morphism $\Phi: S\to M_{d, N}$ where $d = \dim X, N = \A^d_{s_0}$ for any point $s_0\in S$. Then the image $\Phi(\text{Bir}_{(X, B)}(\X/S, \B))$ is finite by \cref{th: moduli_finite}. Meanwhile, $\text{Bir}_{(X, B)}(\X/S, \B)$ is dense in $S$, and $S$ is irreducible. This implies that $\Phi(S)$ is a one-point set, and the theorem follows.
		\end{proof}
		
		We conclude the paper by providing an example showing that \cref{th: 0-pair_iso} cannot be strengthened to the lc case. The main obstacle is that the set $\Iso_{(X,B)}(\X/S, \B)$ may not be constructible. 
		\begin{example}
			Let $C\subset \Pp^2_\Ko$ be a nodal cubic. Then $(\Pp^2_\Ko, C)$ is a projective lc $0$-pair. Choose $9$ very general regular points $p_0, p_1, \dots, p_8$ on $C$. Then the rational surface $X=\text{Bl}_{p_0, p_1, \dots p_8} \Pp^2_\Ko$ includes no $(-2)$-curves. Let $B$ be the strict transform of $C$ to $X$. Then $(X, B)$ is a \textit{generic} Loijenga pair \cite[Definition 1.4]{GHK15}. By \cite[Example 5.11]{Ser26}, there is a projective elementary family $(\X/S, \B)$ such that $\Iso_{(X, B)}(\X/S, \B)$ is dense but not constructible. Meanwhile, $\text{Bir}_{(X, B)}(\X/S, \B) = S$.
		\end{example}

\end{document}